\newtheorem{theorem}{Theorem}
\newtheorem{lemma}{Lemma}
\newtheorem{corollary}{Corollary}
\newtheorem{remark}{Remark}
\theoremstyle{definition}
\newcommand{\drop}[1]{}
\newcommand{\no}{\noindent}
\newcommand{\fer}[1]{(\ref{#1})}
\newcommand{\qtext}[1]{\quad\text{#1}}
\newcommand{\bA}{\mathbf{A}}
\newcommand{\bb}{\mathbf{b}}
\newcommand{\bef}{\mathbf{f}}
\newcommand{\bG}{\mathbf{G}}
\newcommand{\bn}{\mathbf{n}}
\newcommand{\bp}{\mathbf{p}}
\newcommand{\bs}{\mathbf{s}}
\newcommand{\bT}{\mathbf{T}}
\newcommand{\bu}{\mathbf{u}}
\newcommand{\bv}{\mathbf{v}}
\newcommand{\bw}{\mathbf{w}}
\newcommand{\bx}{\mathbf{x}}
\newcommand{\by}{\mathbf{y}}
\newcommand{\bz}{\mathbf{z}}
\newcommand{\bpsi}{\boldsymbol{\psi}}
\newcommand{\bxi}{\boldsymbol{\xi}}
\newcommand{\eps}{\varepsilon}
\newcommand{\vfi}{\varphi}
\newcommand{\grad}{\nabla}
\newcommand{\p}{\partial}
\newcommand{\N}{\mathbb{N}}
\newcommand{\R}{\mathbb{R}}
\def\O{\Omega}
\newcommand{\e}{{\text{e}}}
\newcommand{\abs}[1]{| #1 |}
\newcommand{\nor}[1]{\| #1 \|}
\DeclareMathOperator{\sign}{sign}
\DeclareMathOperator{\supp}{supp}
\title{ Well-posedness of  a cross-diffusion population model with nonlocal diffusion
\thanks{Supported by Spanish MCI Project MTM2017-87162-P.}}
\author{Gonzalo Galiano  \thanks{Dpt. of Mathematics, Universidad de Oviedo,
 c/ Calvo Sotelo, 33007-Oviedo, Spain ({\tt galiano@uniovi.es, julian@uniovi.es})}
    \and Juli\'an Velasco\footnotemark[2] }
\date{}
\begin{document}

\thispagestyle{plain}

\maketitle

\begin{abstract}
  We prove the existence and uniqueness of solution of a nonlocal cross-diffusion competitive population model for two species. The model may be considered as a version, or even an approximation, of the paradigmatic Shigesada-Kawasaki-Teramoto cross-diffusion model, in which the usual diffusion differential operator is replaced by an integral diffusion operator. The proof of existence of solutions is based on a compactness argument, while the uniqueness of solution is achieved through a duality technique.  
  
\no\emph{Keywords: }
nonlocal diffusion, cross-diffusion, evolution problem, existence of solutions, uniqueness of solution, Shigesada-Kawasaki-Teramoto population model

\no\emph{AMS Subject Classifications: } 
  35R09, 45K05, 92D25
  
\end{abstract}

\section{Introduction}
Let $T>0$ and $\O\subset\R^d$ $(d\geq 1)$ be an open and bounded set with Lipschitz continuous boundary.  We consider the following problem. For $i=1,2$, find $u_i:[0,T]\times\O\to\R_+$ such that 
\begin{align}
&  \p_t u_i(t,\bx)  =  \int_\O J(\bx-\by) \big(p_i (\bu(t,\by))-p_i (\bu(t,\bx)) \big) d\by +f_i(\bu(t,\bx)), \label{eq.eq}\\
&   u_i(0,\bx)=  u_{0i}(\bx),  \label{eq.id}
\end{align}
 for $(t,\bx)\in Q_T=(0,T)\times\O$, and for some $u_{0i}:\O\to\R_+$. Here, $\R_+=[0,\infty)$,  $\bu=(u_1,u_2)$, the diffusion kernel, $J:\R^d\to\R_+$, is an even function  and, for $i,j=1,2$, $i\neq j$, the diffusion and reaction functions are given by
  \begin{align}
  \label{def:fun}
 p_i(\bu) = u_i(c_i+a_i u_i + u_j),\quad  f_i(\bu) = u_i\big(\alpha_i-(\beta_{i1} u_1 + \beta_{i2}u_2)\big),
\end{align}
for some non-negative constant coefficients $c_i,~a_i,~\alpha_i,~\beta_{ij}$.

Problem \fer{eq.eq}-\fer{eq.id} with diffusion and reaction functions given by \fer{def:fun} is a nonlocal diffusion version of the Shigesada-Kawasaki-Teramoto (SKT) population model introduced in \cite{Shigesada1979}, which reads, for $i=1,2$,
\begin{align}
&  \p_t v_i  =  \Delta p_i (\bv) +f_i(\bv)&&\text{in }Q_T, & \label{eq.eqs}\\
&  \grad p_i (\bv) \cdot \bn = 0 &&\text{on }(0,T)\times\p\O, & \label{eq.bcs}\\
&   v_i(0,\cdot)=  u_{0i}  && \text{in }\O. \label{eq.ids}
\end{align}

The SKT problem \fer{eq.eqs}-\fer{eq.ids} has attracted much attention in the last decades due to several factors, among which its capacity of producing non-uniform steady states, of capturing population segregation phenomena, or of exhibiting instability with respect to the uniform steady states leading to pattern formation.
None of these properties are verified if the diffusion functions, $p_i$, lack of the cross terms $u_1u_2$.

In addition, the mathematical theory developed to prove the well-posedness of the model is quite sophisticated, mainly due to the fact that cross-diffusion systems of PDE's do not enjoy, in general, of a comparison principle allowing to employ classical techniques such as the method of sub- and super-solutions. Moreover, no maximum or minimum principles hold, so that even the non-negativity of the solution components is not evident.

The literature on the SKT problem is abundant.  
Regarding the problem of existence of weak solutions, the first global existence result is due to Kim \cite{Kim1984}, for a simplified version of the problem ($a_i=0$, one space dimension). Yagi \cite{Yagi1993} deduced that if the self-diffusion coefficients are small ($8a_i >1$), a global weak solution do exist, the smallness condition implying that the diffusion matrix is positive definite, and hence the problem is uniformly parabolic.  This result was extended in \cite{Galiano2003a} to the coefficient restriction $a_i>0$. In this case, the diffusion matrix is not, a priori, definite positive, and entropy estimates obtained by using the test functions $\ln(u_i)$ play a key role to overcome the difficulty of obtaining suitable gradient estimates. 
The result in \cite{Galiano2003a},  holding for one-dimensional spatial domains, was extended by 
Chen and J\"ungel \cite{Chen2004}  to up to three-dimensional domains. In a generalization of their techniques, J\"ungel \cite{Jungel2015boundedness} showed, among other properties, that the domain dimension may be arbitrarily taken, and further generalized the form of the diffusion functions. This generalization had been already studied by Desvillettes et al. \cite{Desvillettes2014entropy},
who also contributed to the understanding of the triangular system (when one of the $u_1u_2$ cross-diffusion terms are absent in the equations) \cite{Desvillettes2015new}, extending the particular results obtained by Amann \cite{Amann1989} from his general theory on quasilinear parabolic systems.

Regarding the problem of uniqueness of solutions of the SKT problem,  
Amann \cite{Amann1989} proved the result in the triangular case. In \cite{Galiano2012}, uniqueness of the full system is proven for weak solutions under the assumption $\grad u_i\in L^\infty(Q_T)$. More recently, Chen and J\"ungel \cite{Chen2018preprint} have proven the weak-strong uniqueness property for renormalized solutions under several parameter restrictions. That is, given a renormalized solution $\bu$ of \fer{eq.eqs}-\fer{eq.ids},  if a strong solution $\tilde\bu$, with $\p_t \tilde u_i,~\grad \tilde u_i\in L^\infty(Q_T)$, does exist then $\bu = \tilde \bu$.
However, the uniqueness of a weak solution of the full SKT problem in the same functional space in which existence is  proven remains an open problem.

Efforts have been also pointed out in other directions: the existence of global classical solutions, see e.g.  \cite{Le2006}, the existence of non-uniform steady states, e.g. \cite{Lou1999}, or the onset of instabilities from perturbations of uniform steady states leading to pattern formation \cite{Gambino2013}, among others. 

\bigskip

To motivate terming problem \fer{eq.eq}-\fer{eq.id} as a \emph{nonlocal diffusion version} of the SKT problem \fer{eq.eqs}-\fer{eq.ids} let us consider the following example, introduced and analyzed by Andreu et al. \cite{Andreu2010book}. This example shows that the Neumann problem for the heat equation
\begin{align}
&  \p_t v  =  \Delta v &&\text{in }Q_T, & \label{eq.eqa}\\
&  \grad v \cdot \bn = 0 &&\text{on }(0,T)\times\p\O, & \label{eq.bca}\\
&   v(0,\cdot)=  v_{0}  && \text{in }\O, \label{eq.ida}
\end{align}
may be approximated by nonlocal diffusion problems of the type
\begin{align}
&  \p_t u(t,\bx)  =  \int_\O J(\bx-\by) (u(t,\by)-u(t,\bx)) d\by, \label{eq.eqan}\\
&   u(0,\bx)=  u_{0}(\bx),  \label{eq.idan}
\end{align}
for $(t,\bx)\in Q_T$, under an appropriate rescaling of the diffusion kernel, which we assume here to be radially symmetric. Indeed, defining 
 \begin{align}
 \label{def.jeps}
  J_\delta(\bz) = \frac{c_1}{\delta^{2+d}}J\Big(\frac{\bz}{\delta}\Big),\qtext{with } c_1^{-1} = \frac{1}{2} \int_{\R^d} J(\bz)z_d^2 d\bz,
 \end{align}
it is proven that the sequence $u_\delta$ obtained as solutions of \fer{eq.eqan}-\fer{eq.idan}
with $J$ replaced by $J_\delta$ is such that 
\begin{align*}
 \lim_{\delta\to0}\nor{u_\delta-v}_{L^\infty(Q_T)}=0,
\end{align*}
where $v$ is the solution of the heat problem \fer{eq.eqa}-\fer{eq.ida}. Similar results are obtained for nonlinear heat equations or $p-$Laplacian diffusion operators, see \cite{Andreu2010book}. 

A formal argument justifying this convergence result is easy to describe in the one-dimensional setting. Consider a smooth function,  $u$,  and the integral operator
\begin{align*}
 A_\delta(u)(x) = \int_\R J_\delta(x-y) (u(y)-u(x)) dy.
\end{align*}
Introducing the change $y=x-\delta z$ and using the Taylor's expansion of $u$ in powers of $\delta$, we get 
\begin{align*}
 A_\delta(u)(x) = \frac{c_1}{\delta}\int_\R J(z)zdz u'(x) + \frac{c_1}{2}\int_\R J(z)z^2dz u''(x) + O(\delta).
\end{align*}
Since $J$ is even, the first term of the right hand side vanishes, so we deduce
\begin{align*}
 A_\delta(u)(x) \to u''(x) \qtext{as }\delta\to0.
\end{align*}
A similar formal argument applies to \fer{eq.eq}-\fer{eq.id}, and in this sense we interpret that \fer{eq.eq}-\fer{eq.id} is a nonlocal diffusion version (or approximation) of  the SKT  original problem \fer{eq.eqs}-\fer{eq.ids}.

The theory developed by Andreu et al. to tackle the problem of existence of solutions to nonlinear versions of the nonlocal diffusion problem \fer{eq.eqan}-\fer{eq.idan} 
is mainly based on semi-group theory and strongly relies on the monotonicity of the nonlocal diffusion operator. However, non-monotone diffusion functions appear often in applications, specially those arising in image processing. For instance, the image restoration bilateral filter \cite{Smith1997, Tomasi1998, Buades2005}, in its continuous evolution formulation, takes the form
\begin{align}
\label{BF}
\p_t u(t,\bx) = \int_\O \exp\Big(-\frac{\abs{\bx-\by}^2}{\rho^2}\Big) \exp\Big(-\frac{\abs{u(t,\bx)-u(t,\by)}^2}{h^2}\Big) (u(t,\by)-u(t,\bx))d\by,
\end{align}
for $(t,\bx)\in Q_T$, where $\O$ is the space of pixels, $u(0,\cdot)$ is the image to be filtered, and $\rho$ and $h$ are positive constants modulating the space and range neighborhoods where the filtering process takes place. 
 
 Due to the lack of monotonicity of the integral operator in \fer{BF} with respect to $u$, the theory developed in \cite{Andreu2010book} is not applicable to this problem. In \cite{Galiano2019}, we introduced a compactness argument to show the existence of global solutions of a general class of problems including \fer{BF}. Our proof is based 
on obtaining suitable estimates of the gradient of the solution by differentiating equation \fer{BF}. Assuming enough regularity on the kernel and diffusion functions, the gradient estimate only depends on the $L^\infty(Q_T)$ boundedness of the solution.   
In the scalar case, this bound is obtained as a consequence of the kernel and diffusion functions symmetry, implying a comparison principle.

Extending this idea to systems of equations, in particular to the SKT problem, relies again in obtaining suitable $L^\infty(Q_T)$ estimates of the solution components which provide, after differentiation of \fer{eq.eq}, estimates of their gradients too, leading to the compactness of an appropriate sequence of approximating functions. 

Recall that the $L^\infty(Q_T)$ boundedness  of solutions of the local diffusion SKT model \fer{eq.eqa}-\fer{eq.ids} has not been proved \cite{Jungel2015boundedness}, fact that introduces serious difficulties in the analisys of this problem. In the local diffusion case, the compactness argument is based on introducing the Lyapunov functional, also known as \emph{entropy functional},
\begin{align}
\label{def.ent}
 E(t) = \sum_{i=1}^2 \int_\O \big(u_i(\ln(u_i)-1)+1) \geq0,
\end{align}
and, by formally using  $\ln(u_i)$ as a test function in \fer{eq.eqa}, deduce the following entropy and gradient estimates \cite{Galiano2003a, Chen2004}
\begin{align*}
 E(t)+ \sum_{i=1}^2 a_i \int_{Q_t} \abs{\grad u_i}^2 \leq E(0)+c. 
\end{align*}
Interestingly, in the nonlocal diffusion problem the entropy functional plays also an important role, in this case for obtaining the $L^\infty(Q_T)$ boundedness of the solution. The formal argument is the following. Assuming the (non-trivial) property $u_i>0$ in $Q_T$, and integrating \fer{eq.eq} in $(0,t)$ for $t<T$, we obtain
\begin{align}
\label{est.eleinfint}
u_i(t,\bx)  \leq 
u_{0i}(\bx) & + C \nor{J}_{L^\infty} \big( \nor{u_i}_{L^1}+ \nor{u_i}_{L^2}^2+\nor{u_1}_{L^2}\nor{u_2}_{L^2}\big) \\
& +\alpha_i \int_0^t u_i(\tau,\bx)d\tau. \nonumber
\end{align}
Thus, if $L^1(Q_T)$ and $L^2(Q_T)$ estimates of $u_i$ are provided, and if $u_{0i}\in L^\infty(\O)$, then Gronwall's lemma implies $u_i\in L^\infty(Q_T)$. The $L^1(Q_T)$ estimate of $u_i$ is obtained by direct integration of \fer{eq.eq} in $\O$. The $L^2(Q_T)$ estimate of $u_i$ is also trivial if $\beta_{ii}>0$, and deduced by integration of $\fer{eq.eq}$ in $Q_T$. However, if $\beta_{ii}=0$ (and $a_i>0$) we must resort to using the test function $\ln(u_i)$ to obtain the following entropy and $L^2(Q_T)$ estimate of $u_i$
\begin{align*}
 E(t)+ \sum_{i=1}^2 a_i \int_{Q_t} \abs{u_i}^2 \leq E(0)+c. 
\end{align*}
We thus see that the result of testing the differential equations of the local and nonlocal diffusion problems with $\ln(u_i)$ both lead to the compactness of an appropriate sequence of approximating solutions. For the local diffusion problem, due to direct estimation of the gradients. For the nonlocal diffusion problem, due to the estimation of the $L^\infty(Q_T)$ norms which yield, thanks to the Lipschitz continuity of the diffusion and reaction functions, the gradient estimates. 

Of course, the previous estimations are just formal because the possibility of $u_i$ vanishing in some subset of $Q_T$ may not be overridden. The aim of this article is
giving conditions on the data and formulating an approximating scheme which lead to proving the existence of solutions of \fer{eq.eq}-\fer{eq.id}. The $L^\infty(Q_T)$ regularity of the resulting solutions is the main tool to prove that, in fact, there exists a unique solution. 

\begin{remark}
 Although we motivated why the solution of the nonlocal SKT problem may be viewed as an approximation to the local diffusion SKT problem, we can not expect the  $L^\infty(Q_T)$ bound of the former to be transferred to the latter. Indeed, \fer{est.eleinfint} shows that the $L^\infty(Q_T)$ bound for the nonlocal problem depends on the $L^\infty(\O)$ bound of the kernel function, $J$. Since the  nonlocal-local diffusion approximation procedure depends on the introduction of a singular kernel, $J_\delta$, see \fer{def.jeps}, the corresponding $L^\infty(Q_T)$ bound of the sequence of solutions of the nonlocal problem (approximating to the local diffusion problem) will, in general, blow up as $\delta\to0$.
\end{remark}

The organization of the paper is the following. In Section~\ref{sec:main} we state the assumptions on the data which ensure the existence and uniqueness of solutions of problem \fer{eq.eq}-\fer{eq.id}, and formulate our main result. In Section~\ref{sec:app} we solve an approximated and regularized problem for which we are able to obtain suitable uniform entropy and $L^\infty(Q_T)$ estimates of its solutions.  In Section~\ref{sec:eps} we pass to the limit in the regularizing-approximating parameter, proving the existence of solutions of problem \fer{eq.eq}-\fer{eq.id}. Finally, in Section~\ref{sec:uniqueness} we prove the uniqueness of solution.

\section{Assumptions and main results}\label{sec:main}  

Since $\O\subset\R^d$ is bounded, we have $\bx-\by \in B$ for all $\bx,\by\in\O$,
 for some open ball $B\subset\R^d$ centered at the origin. Thus, for $J$ defined on $\R^d$, we may 
 always replace it in \fer{eq.eq} by its restriction to $B$, $J|_B$. Abusing on notation, we write $J$ instead of  
$J|_B$ in the rest of the paper.

We always assume, at least, the following hypothesis on the data.
  
\no \textbf{Assumptions (H)}
\begin{enumerate}
\item The final time, $T>0$, is arbitrarily fixed. The spatial domain, $\O\subset\R^d$ $(d\geq 1)$, is an open and bounded set with Lipschitz continuous boundary.

 \item The kernel function $J \in L^\infty(B)\cap BV(B)$ is even and non-negative, with 
 \begin{align}
 \label{cond.j1}
 \{\bx\in B: \nor{\bx}\leq \rho \}\subset \supp(J), 
 \end{align} 
 for some positive constant $\rho$. 
 
\item The initial data $u_{0i}\in L^\infty(\O)\cap BV(\O)$ are non-negative, for $i=1,2$. 

\item For $i,j=1,2$, $i\neq j$, the constants $c_i,~a_i,~\alpha_i,~\beta_{ij}$ are non-negative.
\end{enumerate}

 
In the following theorem we state the main result of this article. There are some important differences  in the results for the local and nonlocal diffusion models. On one hand, nonlocal diffusion operators do not produce a spatial regularization effect on the solution with respect to the initial data \cite{Andreu2010book}. Thus, since it does not provide compactness, the diffusion operator does not play an essential role for the existence of solutions of the model. This is reflected in the possibility of allowing the linear and self-diffusion coefficients to vanish. That is, the case $c_i=a_i=0$ is not excluded (if $\beta_{ii}>0$) for the nonlocal diffusion model. However, such case is certainly excluded in the local diffusion model.   

On the other hand, in the local diffusion model the initial data may be taken from a large space of distributions, being the corresponding notion of solution interpreted in the weak sense. Our result for the nonlocal diffusion problem assumes $u_{0i}\in L^\infty(\O)\cap BV(\O)$ and returns a strong solution. While the $L^\infty(Q_T)$ boundedness of the initial data  is a common assumption in reaction-diffusion systems, the bounded variation is a technical assumption needed to give sense to  the spatial differentiation of \fer{eq.eq}. However, notice that  the BV regularity is an usual standard in image processing problems like \fer{BF} and that, nonetheless,    scalar   problems with monotone diffusion functions only need of $L^1(\O)$ regularity of the initial data \cite{Andreu2010book, Galiano2019}.

\begin{theorem}
 \label{th.existence}
Assume (H) and
\begin{align*}
 a_i+\beta_{ii} >0, \qtext{for } i=1,2 .
\end{align*}
Then, there exists a unique strong solution $(u_1,u_2)$ of problem \fer{eq.eq}-\fer{eq.id} with $u_i\geq 0$ a.e. in $Q_T$ and such that, 
for $i=1,2$ and $t\in [0,T]$,
\begin{align}
&  u_i\in W^{1,\infty}(0,T;L^\infty(\O))\cap C([0,T];L^\infty(\O)\cap BV(\O)), \nonumber\\
& E(t) + \sum_{i=1}^2 a_i \int_{Q_t}\int_\O J(\bx-\by) \big(u_i(s,\bx)-u_i(s,\by)\big)^2 d\by d\bx ds \leq E(0)+c, \label{th1:entropy}
\end{align}
with $E(t)$ defined by \fer{def.ent}, and for some constant $c>0$ independent of $J$.
\end{theorem}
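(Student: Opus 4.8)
The plan is to establish existence through a three-stage scheme --- regularize, estimate, pass to the limit --- and then to prove uniqueness by a duality/Gronwall argument that exploits the $L^\infty(Q_T)$ bound produced along the way. Because the nonlocal operator carries no spatial derivatives, the evolution problem \fer{eq.eq}-\fer{eq.id} is essentially an ordinary differential equation in a Banach space of functions of $\bx$; its only sources of non-regularity are the quadratic growth of $p_i$ and $f_i$ (which could allow finite-time blow-up) and the possible vanishing of $u_i$ (which obstructs the entropy computation). In Section~\ref{sec:app} I would freeze both difficulties by truncating $p_i$ and $f_i$ at a large level $M$, so that the nonlinearities become globally Lipschitz and bounded and vanish for negative arguments, and by regularizing the logarithm as $\ln(u_i+\eps)$. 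For the resulting problem the right-hand side is a globally Lipschitz map on $L^\infty(\O)\times L^\infty(\O)$, so the Banach fixed-point theorem (Cauchy--Lipschitz in a Banach space) yields a unique global-in-time solution, continuous in time with values in $L^\infty(\O)$.

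The core of the argument is a family of a priori estimates uniform in $M$ and $\eps$. \emph{(i) Non-negativity:} since $p_i$ and $f_i$ have been extended by zero for negative values and $p_i\geq0$ for non-negative arguments, the production term $\int_\O J(\bx-\by)p_i(\bu(t,\by))\,d\by$ is non-negative where $u_i$ vanishes; testing the equation for $u_i$ against its negative part and applying Gronwall's lemma gives $u_i\geq0$. \emph{(ii) $L^1(Q_T)$ bound:} integrating \fer{eq.eq} over $\O$, the nonlocal term cancels by Fubini and the evenness of $J$, leaving $\frac{d}{dt}\int_\O u_i=\int_\O f_i(\bu)\leq\alpha_i\int_\O u_i$, whence Gronwall. \emph{(iii) Entropy and $L^2(Q_T)$ bound:} testing with $\ln(u_i+\eps)$ and symmetrizing the nonlocal term produces
\begin{align*}
 -\tfrac12\int_\O\int_\O J(\bx-\by)\big(p_i(\bu(\by))-p_i(\bu(\bx))\big)\big(\ln u_i(\by)-\ln u_i(\bx)\big)\,d\by\,d\bx,
\end{align*}
whose self-diffusion part is controlled from below, via $(a^2-b^2)(\ln a-\ln b)\geq(a-b)^2$, by the dissipation term $a_i\int_\O\int_\O J(\bx-\by)(u_i(\bx)-u_i(\by))^2$; the linear term $c_i$ contributes a sign-favourable quantity, while the cross term $u_iu_j$ and the reaction terms are absorbed by Young's inequality together with (ii) and, when $\beta_{ii}>0$, the dissipative $-\beta_{ii}u_i^2\ln u_i$. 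This yields \fer{th1:entropy} at the regularized level together with an $L^2(Q_T)$ bound. \emph{(iv) $L^\infty(Q_T)$ bound:} inserting the $L^1$ and $L^2$ bounds into the integrated inequality \fer{est.eleinfint}, which is legitimate now that $u_i\geq0$, and applying Gronwall gives an $L^\infty(Q_T)$ bound independent of $M$; choosing $M$ larger than this bound deactivates the truncation. \emph{(v)} The uniform $W^{1,\infty}(0,T;L^\infty(\O))$ regularity then follows immediately from the equation, and the uniform $BV(\O)$ bound is obtained by the translation method: estimating the total variation of $u_i(t,\cdot)$ through spatial difference quotients of \fer{eq.eq}, controlled by $\nor{J}_{BV}$, $\nor{u_{0i}}_{BV}$ and the $L^\infty$ bound, and closing with Gronwall.

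With these bounds in hand I would pass to the limit (Section~\ref{sec:eps}). The uniform $BV(\O)$ bound supplies spatial compactness (Helly / Fr\'echet--Kolmogorov), while the uniform $W^{1,\infty}$-in-time bound supplies equicontinuity in time, so an Aubin--Lions--type argument extracts a subsequence converging strongly in $L^1(Q_T)$ and almost everywhere; the quadratic nonlinearities converge by the $L^\infty$ bound, identifying the limit as a strong solution, and lower semicontinuity of $E$ and of the convex dissipation preserves \fer{th1:entropy}. For uniqueness (Section~\ref{sec:uniqueness}) I would take two solutions $\bu,\tilde\bu$, set $w_i=u_i-\tilde u_i$, and note that $p_i(\bu)-p_i(\tilde\bu)$ and $f_i(\bu)-f_i(\tilde\bu)$ are, thanks to the common $L^\infty(Q_T)$ bound, linear combinations of $w_1,w_2$ with bounded coefficients; a duality/Gronwall estimate on the resulting linear nonlocal system then forces $w_i\equiv0$.

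I expect the main obstacle to be steps (iii)--(iv): obtaining the $L^\infty(Q_T)$ bound uniform in the truncation level $M$. This hinges on the interplay between non-negativity, the $L^2(Q_T)$ bound, and the structural condition $a_i+\beta_{ii}>0$, since in the degenerate case $\beta_{ii}=0$ the $L^2$ control is available only through the entropy dissipation, and the cross term $u_iu_j$ in $p_i$ must be absorbed without recourse to a comparison principle. The uniform $BV$ propagation in (v), which is precisely what supplies the spatial compactness absent from the nonlocal operator itself, is the other delicate point.
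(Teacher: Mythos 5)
Your plan follows the same architecture as the paper's proof (regularize, derive $L^1$/entropy/$L^2$/$L^\infty$/$BV$ estimates, pass to the limit by compactness, prove uniqueness through the $L^\infty(Q_T)$ bound), and most of your variants are legitimate: truncation plus Cauchy--Lipschitz in place of the paper's implicit time discretization with a fixed point in $L^\infty$ balls, the translation method for the $BV$ bound in place of differentiating the regularized equation, and a linearize-plus-Gronwall uniqueness argument in place of the paper's duality scheme. The genuine gap is in your step (iii), at the cross-diffusion part of the entropy estimate, which you propose to ``absorb by Young's inequality''. That term is
\begin{align*}
I_2^{ik}=\frac12\int_\O\int_\O J(\bx-\by)\big(u_iu_k(\by)-u_iu_k(\bx)\big)\big(\ln u_i(\by)-\ln u_i(\bx)\big)\,d\by\,d\bx ,
\end{align*}
and Young's inequality cannot handle it: writing $u_iu_k(\by)-u_iu_k(\bx)=u_k(\by)(u_i(\by)-u_i(\bx))+u_i(\bx)(u_k(\by)-u_k(\bx))$, the second piece carries the factor $u_i(\bx)\abs{\ln u_i(\by)-\ln u_i(\bx)}$, which is \emph{not} dominated by any multiple of $\abs{u_i(\by)-u_i(\bx)}$ (it blows up as $u_i(\by)\to0$ with $u_i(\bx)$ fixed), so it cannot be paired against the dissipation $a_i\int_\O\int_\O J\,(u_i(\by)-u_i(\bx))^2$. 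Worse, the theorem allows $a_i=0$ when $\beta_{ii}>0$, in which case there is no dissipation to absorb into at all, while the cross term $u_iu_j$ in $p_i$ is always present. A structural argument is required here. The paper's is to sum the entropy identities over $i=1,2$: since the coefficient of $u_j$ in $p_i$ is exactly $1$, the two cross terms combine into
\begin{align*}
I_2^{12}+I_2^{21}=\frac12\int_\O\int_\O J(\bx-\by)\big(u_1u_2(\by)-u_1u_2(\bx)\big)\big(\ln (u_1u_2)(\by)-\ln (u_1u_2)(\bx)\big)\,d\by\,d\bx\ \geq 0
\end{align*}
by monotonicity of $\ln$. (An alternative that stays within your one-equation-at-a-time framework: symmetrize, using the evenness of $J$, to get $I_2^{ik}=\int_\O\int_\O J\,u_iu_k(\by)\big(\ln u_i(\by)-\ln u_i(\bx)\big)d\by\,d\bx$, then apply the inequality $s(\ln s-\ln\sigma)\geq s-\sigma$ of \fer{elemineq1} and the non-negativity of $u_k$ to obtain $I_2^{ik}\geq-\nor{J}_{L^\infty}\nor{u_i}_{L^1}\nor{u_k}_{L^1}$, which your step (ii) controls. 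Either way, the mechanism is a logarithmic convexity/monotonicity fact, not Young's inequality, and it is the crux of the whole entropy step.)

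A second, related caveat concerns your truncation. The product structure that rescues the cross term does not survive truncating $u_i\mapsto\min(u_i,M)$ inside $p_i$: for instance with $M=6$, $\bu(\by)=(5,5)$ and $\bu(\bx)=(100,\,0.3)$ one has $u_1u_2(\by)<u_1u_2(\bx)$ while $\min(u_1,M)\min(u_2,M)(\by)>\min(u_1,M)\min(u_2,M)(\bx)$, so the summed cross terms lose their sign for the truncated system. This is precisely why the paper does not truncate: it keeps $p_i,~f_i$ intact, evaluates them at $\bu^++\eps$ (thereby also dispensing with your exact non-negativity step (i) --- the paper only shows $\nor{u_{\eps i}^-}_{L^\infty(0,T;L^1(\O))}\leq c\eps$ and recovers $u_i\geq0$ in the limit), and pays for the quadratic growth by running the fixed point on small time intervals inside $L^\infty$ balls, continuing the solution by means of the a posteriori estimates. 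If you keep the truncation, you must either verify that your truncation preserves the entropy structure or use the symmetrized lower bound above, which does survive it; with that repair, the remaining steps of your plan go through along the same lines as the paper's.
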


\begin{remark}

\begin{enumerate}
\item The notion of strong solution of \fer{eq.eq}-\fer{eq.id} is the usual: a function $\bu$ with $u_i\in W^{1,1}(0,T;L^1(\O)) \cap C([0,T]; L^2(\O)$ satisfying the equations in the a.e. sense in $Q_T$.

\item It is a common assumption to impose the normalizing condition $\int_{\R^d} J(\by)d \by = 1$, implying 
\begin{align}
 \label{conc.J2}
  \int_{\R^d} J(\by-\bx)d \by = 1 \qtext{for a.e. } \bx\in\O.
\end{align}
However, this property is no longer true if the integration is performed in $\O$. 
Condition \fer{cond.j1} and the Lipschitz continuity of $\p\O$, implying the interior cone property, allows to keep 
a property weaker than \fer{conc.J2} but enough to our purposes. Defining $m:\O\to\R_+$ by $m(\bx)= \int_{\O} J(\bx-\by)d \by, $
we have, for some constant $J_0>0$,  
 \begin{align}
 \label{conc.J}
  J_0 \leq m(\bx) \leq \nor{J}_{L^1(B)} \qtext{for a.e. }\bx\in\O.
 \end{align}

\end{enumerate}

 \end{remark}

\section{Existence of solutions of a regularized and approximated problem}\label{sec:app}

Let $\eps\in(0,1)$ and consider two sequences of functions $J_\eps$ and $u_{0\eps i}$ satisfying (H) and, in addition, 
\begin{align}
\label{ass:regularity}
J_\eps\in W^{1,1}(B) ,\quad  u_{0\eps i}\in W^{1,\infty}(\O).
\end{align}
We may construct these sequences  to have, as $\eps\to0$, 
\begin{align*}
  & J_\eps \to J \qtext{strongly in }L^q(B), \text{ with }\nor{J_\eps}_{L^\infty(B)}\leq K, &&&  \\
 & u_{0\eps i} \to u_{0i} \qtext{strongly in }L^q(\O), \text{ with } \nor{u_{0\eps i}}_{L^\infty(\O)}\leq K, &&&  
 \end{align*}
for any $q\in[1,\infty)$, where $K>0$ is independent of $\eps$, and 
\begin{align*}
 \nor{\nabla J_{\eps}}_{L^1(B)}\to \text{TV}(J),\quad  \nor{\nabla u_{0\eps i}}_{L^1(\O)}\to \text{TV}(u_{0i}), 
\end{align*}
where TV denotes the total variation with respect to the $\bx$ variable, see \cite{Ambrosio2000}. 
Notice that, in particular,
\begin{align}
& \grad J_{\eps}  \qtext{is uniformly bounded in }L^1(B) , \label{propeps2}\\
& \grad u_{0\eps i}  \qtext{is uniformly bounded in }L^1(\O) , \label{propeps1b} 
\end{align}
and that the function 
\begin{align}
 \label{conc.Jeps}
  m_\eps(\bx)= \int_{\O} J_\eps(\bx-\by)d \by, 
\end{align}
may be taken satisfying property \fer{conc.J}, possibly redefining the $\eps$- independent constant $J_0>0$. More in general, and using the $L^1(\O)$ uniform boundedness of $J_\eps$, we deduce that $m_\eps$ satisfies 
 \begin{align}
 \label{conc.Jeps2}
  J_0 \leq m_\eps(\bx) \leq J_1 \qtext{for a.e. }\bx\in\O,
 \end{align}
for some positive constants $J_0,~J_1$ independent of $\eps$.
 
In this section, we prove the existence of solutions of the following approximated and regularized problem. For $i=1,2$, find $u_i:[0,T)\times\O\to\R$ such that, for $(t,\bx)\in Q_T$,  
\begin{align*}
& \p_t u_i(t,\bx) =  \int_\O J_\eps(\bx-\by) \big(p_i (\bu^+(t,\by)+\eps)-p_i (\bu^+(t,\bx)+\eps) \big) d\by + f_i(\bu^+(t,\bx)+\eps),\\
& u_{i}(0,\bx) = u_{0\eps i}(\bx),
\end{align*}
where we used the notation $v_i = v_i^+-v_i^-$ for splitting a scalar function into its positive and negative parts, and write $\bv^+=(v_1^+,v_2^+)$.  We also denote by (L) the following straightforward property: For $i=1,2$,
\begin{align*}
p_i,~f_i\text{ and the positive part are Lipschitz continuous functions.}\qquad \text{(L)}
\end{align*}

\subsection{Existence of solutions of a time independent problem}

Let  $N\in\N$, $M_0 = \max_{i=1,2}\nor{u_{0\eps i}}_{L^\infty}\leq K$, and set $M_{j}=M_0\sum_{k=0}^j 2^{-k}$ for $j=0,1,\ldots, N$, so that $M_j \leq 2M_0$ for all $j$. Consider the collection of complete metric spaces
\begin{align*}
V_{j}=\{ \bv \in W^{1,\infty}(\O)\times W^{1,\infty}(\O): \nor{v_i}_{L^\infty} \leq M_j,\text{ for }i=1,2\}.
\end{align*}
 Let $\bu_\eps^0 = \bu_{0\eps}$. For $j=0,1,\ldots,N-1$, assume that $\bu^j_\eps \in V_{j}$ is given and consider the operator $\bT^{j+1}$ defined on $V_{j+1}$ by, for $i=1,2$,  
\begin{align}
\label{fpop}
 T_i^{j+1}(\bv)(\bx) = u_{\eps i}^j(\bx) & + \tau_{j+1}  \int_\O J_\eps(\bx-\by) \big(p_i (\bv^+(\by)+\eps)-p_i (\bv^+(\bx)+\eps) \big) d\by \\ 
 & + \tau_{j+1} f_i(\bv^+(\bx)+\eps), \nonumber
\end{align}
where $\tau_{j+1}>0$ is a constant to be fixed.

Let us check that $\bT^{j+1}$ has a fixed point in $V_{j+1}$.  To do this, we employ the Banach's fixed point theorem. 

First notice that \fer{ass:regularity} and (L) imply  
$\bT^{j+1}(V_{j+1}) \subset W^{1,\infty}(\O)\times W^{1,\infty}(\O)$.
Using that $J_\eps$ is uniformly bounded in $L^\infty(B)$ and the explicit expressions of $p_i$ and $f_i$,  we obtain
\begin{align*}
 \abs{T_i^{j+1}(\bv)(\bx)} & \leq M_{j}+C_0\tau_{j+1} (1+M_{j+1}+M_{j+1}^2),
\end{align*}
where $C_0$ is a constant independent of $j$ and $\eps$. Taking into account that $M_0\leq M_{j}\leq 2M_0$ for all $j$ and choosing 
\begin{align*}
\tau_{j+1} < \frac{C(M_0)}{2^{j+1}} ,
\end{align*}
with $C(M_0)\leq M_0/(C_0(1+2M_0+4M_0^2))$, 
we deduce $\bT^{j+1}(V_{j+1})\subset V_{j+1}$.

To prove the contractivity, let $\bv,\bw \in V_{j+1}$. Then 
\begin{align*}
 T_i^{j+1}(\bv)(\bx) & - T_i^{j+1}(\bw)(\bx) = \tau_{j+1}  \int_\O J_\eps(\bx-\by) \big(p_i (\bv^+(\by)+\eps)-p_i (\bw^+(\by)+\eps) \big) d\by \\
 & - \tau_{j+1} m_\eps(\bx) \big(p_i (\bv^+(\bx)+\eps)- p_i (\bw^+(\bx)+\eps) \big)     \\
 & +\tau_{j+1}\big(f_i(\bv^+(\bx)+\eps)-f_i(\bw^+(\bx)+\eps)\big).
\end{align*}
Using \fer{conc.Jeps2}, (L) and  the uniform boundedness of $M_j$, we deduce
\begin{align*}
\sum_{i=1}^2\abs{ T_i^{j+1}(\bv)(\bx) & - T_i^{j+1}(\bw)(\bx)} \leq C_1\tau_{j+1} (L_p+L_f) \nor{\bv-\bw}_{L^\infty},
\end{align*}
where $C_1$ is a constant independent of $j$ and $\eps$, and with $L_p$ and $L_f$ denoting the Lipschitz continuity constants of $\mathbf{p}$ and $\mathbf{f}$ in the interval $[-2M_0,2M_0]$.
Choosing 
\begin{align}
\label{cond.tau}
\tau_{j+1}< \min\Big(\frac{C(M_0)}{2^{j+1}}, \frac{1}{C_1(L_p+L_f)}\Big),
\end{align}
 we find that $\bT^{j+1}$ is a strict contraction in $V_{j+1}$, and therefore there exists a unique fixed point of $\bT^{j+1}$ in $V_{j+1}$, that we denote by $\bu^{j+1}_\eps$. To simplify the notation we write in the following $\bu,~\bu^j,~\tau$ instead of $\bu^{j+1}_\eps,~\bu^j_\eps,~\tau_{j+1}$, respectively. Observe that $\bu$ satisfies, for $\bx\in\O$,
\begin{align}
 u_i(\bx) = u_i^j(\bx) & + \tau  \int_\O J_\eps(\bx-\by) \big(p_i (\bu^+(\by)+\eps)-p_i (\bu^+(\bx)+\eps) \big) d\by  \label{eq:discu}\\ 
 & + \tau f_i(\bu^+(\bx)+\eps),\nonumber
\end{align}
and that $\tau$ is independent of $\eps$.
\begin{remark}
Since $\sum_j\tau_j \leq C(M_0)$, if we construct a solution of problem \fer{eq.eq}-\fer{eq.id} interpolating in time from the sequence of solutions of \fer{eq:discu}, the final time can not be arbitrarily large. That is, the solution will be a solution local in time. However, we shall obtain a posteriori estimates on  $\bu^{j+1}_\eps$ which will allow to continue the solution to any arbitrarily fixed final time.
\end{remark}

\begin{lemma}\label{lemma:1}
Let $(u_1,u_2)\in W^{1,\infty}(\O)\times W^{1,\infty}(\O)$ be given by \fer{eq:discu}. Then, for $i=1,2$ and for some positive constant $c$, independent of $\eps$ and $\tau$,  the following estimates hold:
\begin{align}
& \sum_{i=1}^2 \big(\nor{u_i^+}_{L^1} +\tau \beta_{ii}\nor{u_i^+}_{L^2}^2 \big) \leq  
\sum_{i=1}^2 \big(\nor{(u_i^j)^+}_{L^1} +c\tau \nor{u_i^+}_{L^1} \big)  +c\tau\eps, \label{lema1:ele1mas}\\
& \sum_{i=1}^2 \nor{u_i^-}_{L^1}  \leq  
\sum_{i=1}^2 \big(\nor{(u_i^j)^-}_{L^1} +c\tau\eps (1+\nor{u_i^+}_{L^1}) \big) , \label{lema1:ele1menos}\\
& \sum_{i=1}^2 \nor{u_i}_{L^\infty}  \leq \sum_{i=1}^2 \big(\nor{u_i^j}_{L^\infty} + c\tau \big( \nor{u_i^+}_{L^\infty}+\nor{u_i^+}_{L^1} + \nor{u_i^+}_{L^2}^2 \big) \label{lema1:eleinf} \\
& \qquad +c\tau\eps(1+\nor{u_i^+}_{L^\infty})\big), \nonumber\\
& \sum_{i=1}^2 \Big( E_i  -\ln(\eps) \nor{u_i^-}_{L^1} +\tau a_i \int_\O\int_\O J_\eps(\bx-\by)  \big(u_i^+(\by)- u_i^+(\bx)\big)^2 d\by d\bx  \Big)   \label{lema1:entropy} \\
& \qquad \leq 
\sum_{i=1}^2 \Big( E_i^j - \ln(\eps) \nor{(u_i^j)^-}_{L^1} +c\tau \big(E_i+  \nor{u_i^+}_{L^1} + \eps\big)\Big) , \nonumber\\
& \sum_{i=1}^2 \nor{\grad u_i}_{L^\infty} \leq \sum_{i=1}^2 \nor{\grad u_i^j}_{L^\infty}  + c\tau L(\nor{\bu}_{L^\infty}) \big( 1 + \sum_{i=1}^2 \nor{\grad u_i}_{L^\infty}\big), \label{lema1:derivada}
\end{align}
where $L(\nor{\bu}_{L^\infty})$ is the maximum of the Lipschitz continuity constants of $\bp$ and $\bef$ in $\{\bs\in\R^2: \abs{s_i}\leq \nor{u_i}_{L^\infty}\}$, and where we introduced the notation 
\begin{align*}
E_i^j = \int_\O ((u_i^j)^+(\bx)+\eps)\big(\ln((u_i^j)^+(\bx)+\eps)-1) d\bx.
\end{align*}
In particular, \fer{lema1:entropy} implies 
\begin{align}
 & \sum_{i=1}^2 \Big( E_i  - \ln(\eps) \nor{u_i^-}_{L^1} +2\tau a_i J_0\nor{u_i^+}_{L^2}^2  \Big) \leq 
\sum_{i=1}^2 \Big( E_i^j + \ln(\eps) \nor{(u_i^j)^-}_{L^1}  \label{lema1:entropy2} \\
& \qquad +c\tau \big(E_i+ a_i\nor{u_i^+}_{L^1}^2 + \nor{u_i^+}_{L^1} + \eps\big)\Big). \nonumber
\end{align}

\end{lemma}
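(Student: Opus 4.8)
\emph{Plan.} The five estimates are obtained, respectively, by testing \fer{eq:discu} against $\chi_{\{u_i>0\}}$ and $-\chi_{\{u_i<0\}}$ (the $L^1$ bounds), by a pointwise argument (the $L^\infty$ bound), by testing against $\ln(u_i^++\eps)$ (the entropy bound), and by differentiating \fer{eq:discu} in $\bx$ (the gradient bound). Writing $P_i(\bx)=p_i(\bu^+(\bx)+\eps)\ge0$, the single recurring tool is the symmetrization identity for even kernels,
\begin{align*}
\int_\O\!\!\int_\O J_\eps(\bx-\by)\,g(\bx)\big(h(\by)-h(\bx)\big)\,d\by\,d\bx = -\tfrac12\int_\O\!\!\int_\O J_\eps(\bx-\by)\big(g(\bx)-g(\by)\big)\big(h(\bx)-h(\by)\big)\,d\by\,d\bx ,
\end{align*}
valid for $g,h\in L^\infty(\O)$, together with the observation that on $\{u_i\le0\}$ one has $u_i^+=0$, hence $P_i=O(\eps)$ and $f_i(\bu^++\eps)=O(\eps)$ there. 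For \fer{lema1:ele1mas} I multiply \fer{eq:discu} by $\chi_{\{u_i>0\}}$ and integrate: the difference term gives $\nor{u_i^+}_{L^1}-\nor{(u_i^j)^+}_{L^1}$ after $u_i^j\chi_{\{u_i>0\}}\le(u_i^j)^+$; the nonlocal term (identity with $g=\chi_{\{u_i>0\}}$, $h=P_i$) leaves only an unfavorable part carrying a factor $P_i$ evaluated on $\{u_i\le0\}$, hence of size $O(\eps)$, producing $c\tau\eps$; and in the reaction term the self-part obeys $-\beta_{ii}(u_i^++\eps)^2\le-\beta_{ii}(u_i^+)^2$, giving the dissipative $-\tau\beta_{ii}\nor{u_i^+}_{L^2}^2$, while $-\beta_{ij}(u_i^++\eps)(u_j^++\eps)\le0$ is discarded and $\alpha_i(u_i^++\eps)$ yields $c\tau\nor{u_i^+}_{L^1}+c\tau\eps$. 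Testing with $-\chi_{\{u_i<0\}}$ gives \fer{lema1:ele1menos}, where reaction and nonlocal now contribute only through factors $P_i,f_i=O(\eps)$ at points of $\{u_i<0\}$, leaving $c\tau\eps(1+\nor{u_i^+}_{L^1})$. Terms carrying $\nor{u_j^+}$ are absorbed after summing in $i$.

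For \fer{lema1:eleinf} I argue pointwise. For the upper bound I discard $-\tau m_\eps(\bx)P_i(\bx)\le0$, estimate $\int_\O J_\eps(\bx-\by)P_i(\by)\,d\by\le\nor{J_\eps}_{L^\infty}\nor{P_i}_{L^1}\le c(\nor{u_i^+}_{L^1}+\nor{u_i^+}_{L^2}^2)+c\eps$, and bound $f_i(\bu^++\eps)\le\alpha_i(u_i^++\eps)$ after dropping the nonpositive quadratic reaction terms, which produces the linear $\nor{u_i^+}_{L^\infty}$ contribution. For the lower bound I only need the points with $u_i(\bx)<0$; there $u_i^+(\bx)=0$, so $P_i(\bx),f_i(\bu^+(\bx)+\eps)=O(\eps)$, and discarding $\tau\int_\O J_\eps P_i(\by)\,d\by\ge0$ gives $u_i(\bx)\ge -\nor{u_i^j}_{L^\infty}-c\tau\eps(1+\nor{u_j^+}_{L^\infty})$. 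Taking suprema and summing yields \fer{lema1:eleinf}.

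The entropy estimate \fer{lema1:entropy} is the crux. Convexity of $r\mapsto r(\ln r-1)$ gives $E_i-E_i^j\le\int_\O\ln(u_i^++\eps)\big(u_i^+-(u_i^j)^+\big)$. I split $u_i^+-(u_i^j)^+=(u_i-u_i^j)+(u_i^--(u_i^j)^-)$; the second piece, paired with $\ln(u_i^++\eps)$, gives $\ln(\eps)\nor{u_i^-}_{L^1}-\ln(\eps)\nor{(u_i^j)^-}_{L^1}$ (using $\ln(u_i^++\eps)\ge\ln\eps$, with equality on $\{u_i<0\}$), and transposing the first of these produces the two $\ln(\eps)$ terms. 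The first piece equals $\tau\int_\O\ln(u_i^++\eps)[\,\text{nonlocal}+f_i\,]$. Symmetrizing the nonlocal part with $g=\ln(u_i^++\eps)$, $h=P_i$, and writing $s=u_i^+(\bx)+\eps$, $s'=u_i^+(\by)+\eps$: the linear part $c_i s$ gives a nonnegative, discardable term; the self-part $a_i s^2$ uses the elementary inequality $(s^2-(s')^2)(\ln s-\ln s')\ge2(s-s')^2$ to yield the dissipation $-\tau a_i\int_\O\!\int_\O J_\eps(u_i^+(\bx)-u_i^+(\by))^2$; and the reaction part is bounded by $c\tau(E_i+\nor{u_i^+}_{L^1}+\eps)$, since $\int(u_i^++\eps)\ln(u_i^++\eps)=E_i+\nor{u_i^+}_{L^1}+c\eps$ and $|t\ln t|\le e^{-1}$ on $(0,1]$. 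The main obstacle is the cross-diffusion term, which I resolve by summing in $i$: the product part $(u_i^++\eps)(u_j^++\eps)$ of $P_i$ equals, for both indices, the common function $g:=(u_1^++\eps)(u_2^++\eps)$, so its symmetrized contribution is $-\tfrac\tau2\int_\O\!\int_\O J_\eps(g(\bx)-g(\by))\big(\ln(u_i^+(\bx)+\eps)-\ln(u_i^+(\by)+\eps)\big)$; since $\ln(u_1^++\eps)+\ln(u_2^++\eps)=\ln g$, summation collapses these into $-\tfrac\tau2\int_\O\!\int_\O J_\eps(g(\bx)-g(\by))(\ln g(\bx)-\ln g(\by))\le0$, which is discarded. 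Collecting terms gives \fer{lema1:entropy}.

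Finally, \fer{lema1:entropy2} follows from \fer{lema1:entropy} by writing $\int_\O\!\int_\O J_\eps(u_i^+(\bx)-u_i^+(\by))^2=2\int_\O m_\eps(u_i^+)^2-2\int_\O\!\int_\O J_\eps u_i^+(\bx)u_i^+(\by)\ge2J_0\nor{u_i^+}_{L^2}^2-2\nor{J_\eps}_{L^\infty}\nor{u_i^+}_{L^1}^2$, using \fer{conc.Jeps2}, and passing the $\nor{u_i^+}_{L^1}^2$ term to the right. For \fer{lema1:derivada} I differentiate \fer{eq:discu} in $\bx$, which is legitimate since $J_\eps\in W^{1,1}(B)$ and $u_i,u_i^j\in W^{1,\infty}(\O)$ with $p_i,f_i$ Lipschitz: the term $\int_\O\grad_\bx J_\eps(\bx-\by)\big(P_i(\by)-P_i(\bx)\big)\,d\by$ is bounded by $c\nor{\grad J_\eps}_{L^1}L(\nor{\bu}_{L^\infty})$ (the constant ``$1$'' term, using \fer{propeps2}), while $m_\eps\grad P_i$ and $\grad\big(f_i(\bu^++\eps)\big)$ contribute $c\tau L(\nor{\bu}_{L^\infty})\sum_i\nor{\grad u_i}_{L^\infty}$ through $\abs{\grad u_i^+}\le\abs{\grad u_i}$ a.e., giving the closable form \fer{lema1:derivada}.
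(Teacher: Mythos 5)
Your proof is correct, and its skeleton coincides with the paper's: sign-set integration for \fer{lema1:ele1mas}--\fer{lema1:ele1menos} (integrating over $\{u_i<0\}$, as the paper does, is the same as your testing with $\chi_{\{u_i<0\}}$, and the paper's integration over all of $\O$ plays the role of your $\chi_{\{u_i>0\}}$ test), a pointwise split for \fer{lema1:eleinf}, the test function $\ln(u_i^++\eps)$ for \fer{lema1:entropy}, differentiation in $x_k$ for \fer{lema1:derivada}, and the expansion $\int_\O\int_\O J_\eps\,(u_i^+(\bx)-u_i^+(\by))^2=2\int_\O m_\eps\abs{u_i^+}^2-2\int_\O\int_\O J_\eps\,u_i^+(\bx)u_i^+(\by)$ for \fer{lema1:entropy2}; in particular, your collapse of the summed cross-diffusion entropy terms into $-\tfrac{\tau}{2}\int_\O\int_\O J_\eps\,(g(\bx)-g(\by))(\ln g(\bx)-\ln g(\by))\le0$ is exactly the paper's argument that $I_2^{12}+I_2^{21}\ge0$. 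Three sub-steps are genuinely different, each a bit more self-contained than the paper's. (i) You derive the discrete-time inequality \fer{entropy1} from convexity of $r\mapsto r(\ln r-1)$ together with the splitting $u_i^+-(u_i^j)^+=(u_i-u_i^j)+(u_i^--(u_i^j)^-)$ and $\ln(u_i^++\eps)=\ln\eps$ on $\{u_i<0\}$, whereas the paper cites \cite{Chen2004}. (ii) For the self-diffusion dissipation you invoke $(s^2-\sigma^2)(\ln s-\ln\sigma)\ge2(s-\sigma)^2$ (logarithmic mean $\le$ arithmetic mean); this is exactly what delivers the constant in \fer{lema1:entropy}, and it quietly repairs the paper's step, where \fer{elemineq1} is multiplied pointwise by $(s'-s)$ — an operation that reverses the inequality on the set $\{s'<s\}$ and, even after reducing to $\{s'>s\}$ by symmetry, only yields half the stated constant. (iii) For the cross Lotka--Volterra terms you observe that the integrand is nonpositive where $u_i^++\eps\ge1$ and use $\abs{t\ln t}\le e^{-1}$ on $(0,1]$ to get $F_2^{ik}\le c(\nor{u_k^+}_{L^1}+\eps)$ term by term; the paper instead orders $\beta_{12}\ge\beta_{21}$, sets $r=\beta_{21}/\beta_{12}$, and combines \fer{elemineq1} with H\"older's inequality — same conclusion, more machinery. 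One shared blemish, immaterial downstream: like the paper's own (entropy3), your reaction bound produces a harmless additive constant that is not literally of the form $c\tau(E_i+\nor{u_i^+}_{L^1}+\eps)$; the corollary that consumes the lemma carries a standalone $+c$ anyway.
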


\bigskip

\begin{proof}
$\bullet$ \emph{$L^1(Q_T)$ estimates. }
Integrating the first equation of \fer{eq:discu} in $\O$ and using the symmetry of $J_\eps$, we obtain
\begin{align}
\int_\O u_1^+(\bx)d\bx & +\tau\beta_{11} \int_\O \abs{u_1^+(\bx)}^2d\bx \leq \int_\O u_1^-(\bx)d\bx + \int_\O u_1^j(\bx)d\bx \label{est.ele1}\\ 
& + \tau \alpha_1 \int_\O u_1^+(\bx) d\bx + \tau\eps\alpha_1\abs{\O}.  \nonumber
\end{align}
Integrating the first equation of \fer{eq:discu} in $\{u_1< 0\}$, we get
\begin{align*}
-\int_\O u_1^-(\bx)d\bx  = &\int_{u_1< 0} u_1^j(\bx)d\bx \\
& + \tau \int_{u_1< 0}\int_\O J_\eps(\bx-\by)\big(p_1(\bu^+(\by)+\eps)-p_1((\eps,u_2^+(\bx)+\eps))\big)d\by d\bx 
\\ & + \tau \int_{u_1< 0} f_1((\eps,u_2^+(\bx)+\eps))d\bx.
\end{align*}
Therefore, using the explicit expressions of $p_1$ and $f_1$, we deduce
\begin{align}
\int_\O u_1^-(\bx)d\bx  \leq & -\int_{u_1< 0} u_1^j(\bx)d\bx 
+\tau \int_{u_1< 0}\int_\O J_\eps(\bx-\by)p_1((\eps,u_2^+(\bx)+\eps))d\by d\bx \nonumber\\ 
& - \tau \int_{u_1< 0} f_1((\eps,u_2^+(\bx)+\eps))d\bx \nonumber \\
& \leq  \int_\O (u_1^j)^-(\bx)d\bx 
+\tau \eps \int_\O\int_\O J_\eps(\bx-\by)(c_1+a_1\eps+u_2^+(\bx)+\eps) d\by d\bx \nonumber \\ 
& - \tau\eps \int_{u_1< 0} \big(\alpha_1 -\eps(\beta_{11}+\beta_{12}) -\beta_{12}u_2^+(\bx)\big)d\bx \nonumber \\
& \leq \int_\O (u_1^j)^-(\bx)d\bx  +\tau \eps J_1 \Big( (c_1+\eps(1+a_1))\abs{\O} + \int_\O u_2^+(\bx) d\bx \Big) \nonumber \\ 
& + \tau\eps \Big( \eps(\beta_{11}+\beta_{12})\abs{\O} + \beta_{12}\int_\O u_2^+(\bx) d\bx \Big) \nonumber \\
& \leq \int_\O (u_1^j)^-(\bx)d\bx  +c \tau \eps \Big( 1 + \int_\O u_2^+(\bx) d\bx \Big) . \label{lema1:menos}
\end{align}
Replacing \fer{lema1:menos}  in \fer{est.ele1} yields
\begin{align}
\int_\O u_1^+(\bx)d\bx  +\tau\beta_{11} \int_\O \abs{u_1^+(\bx)}^2d\bx \leq & 
\int_\O (u_1^j)^+(\bx)d\bx  + \tau \alpha_1 \int_\O u_1^+(\bx) d\bx   \label{lema1:mas}\\ 
& +c \tau \eps \Big( 1 + \int_\O u_2^+(\bx) d\bx \Big) . \nonumber
\end{align}
Estimates similar to \fer{lema1:menos} and \fer{lema1:mas} are obtained from the second equation ($i=2$) of \fer{eq:discu}, leading to \fer{lema1:ele1mas} and \fer{lema1:ele1menos}. 


\no$\bullet$ \emph{$L^\infty(\O)$ estimate. }
On one hand, if $\bx\in\{\by\in\O:u_1(\by)<0\}$ we deduce from  \fer{eq:discu}
\begin{align*}
u_1^-(\bx) = &u_1^j(\bx) +\tau\int_\O J_\eps(\bx-\by)\big(p_1(\bu^+(\by)+\eps)-p_1(\eps,u_2(\bx)^++\eps)\big) d\by \\
& + \tau f_1(\eps,u_2(\bx)^++\eps).
\end{align*}
Hence, 
\begin{align*}
u_1^-(\bx) & \leq -u_1^j(\bx) +\tau\eps J_1 \big(c_1+a_1\eps +\eps (u_2(\bx)^++\eps)\big) + \tau\eps \big(\beta_{11}\eps +\beta_{12}(u_2(\bx)^++\eps)\big)\\
& \leq (u_1^j)^-(\bx) +c\tau\eps(1+u_2^+(\bx)).
\end{align*}
On the other hand, if $\bx\in\{\by\in\O:u_1(\by)\geq0\}$ then \fer{eq:discu} yields
\begin{align*}
u_1^+(\bx) & \leq u_1^j(\bx) + \tau\int_\O J_\eps(\bx-\by)\big(p_1(\bu^+(\by)+\eps) d\by +\tau\alpha_1 (u_1^+(\bx)+\eps),
\end{align*}
implying
\begin{align*}
u_1^+(\bx)  \leq & (u_1^j)^+(\bx) + c\tau\nor{J_\eps}_{L^\infty} \big( \nor{u_1^+}_{L^1} + \nor{u_1^+}_{L^2}^2 +
\nor{u_1^+}_{L^2} \nor{u_2^+}_{L^2} \big)\\
& +c\tau (u_1^+(\bx) +\eps). 
\end{align*}
Therefore, for any $\bx\in\O$, and recalling that $J_\eps$ is uniformly bounded in $L^\infty(B)$ and that $\abs{v(\bx)} =v^+(\bx) + v^-(\bx)$, we deduce 
\begin{align*}
\abs{u_1(\bx)}  \leq & \abs{u_1^j(\bx)} + c\tau \big( \abs{u_1^+(\bx)}+\nor{u_1^+}_{L^1} + \nor{u_1^+}_{L^2}^2 +
\nor{u_1^+}_{L^2} \nor{u_2^+}_{L^2} \big)\\
& +c\tau\eps(1+u_2^+(\bx)). 
\end{align*}
A similar estimates may be obtained for $\abs{u_2(\bx)}$, leading to \fer{lema1:eleinf}.

\no$\bullet$ \emph{Entropy estimate. }
We multiply \fer{eq:discu} by $ \ln(u_i^++\eps)$ and integrate in $\O$, obtaining
\begin{align}
 \int_\O  u_i(\bx)  \ln(u_i^+(\bx)+\eps) &  d\bx =  \int_\O u_{i}^j(\bx)  \ln(u_i^+(\bx)+\eps) d\bx \label{logid} \\
 & -  \frac{\tau}{2}  \int_\O J_\eps(\bx-\by) \big(p_i (\bu^+(\by)+\eps)-p_i (\bu^+(\bx)+\eps) \big) \nonumber \\
 & \qquad \times \big(\ln(u_i^+(\by)+\eps) - \ln(u_i^+(\bx)+\eps) \big) d\by \nonumber \\
 & + \tau \int_\O   f_i(\bu^+ (\bx)+ \eps))  \ln(u_i^+(\bx)+\eps) d\bx. \nonumber
\end{align}
We now estimate the different terms of \fer{logid}.

\medskip

\no$\star$ \emph{The discrete time derivative. } Like in \cite{Chen2004}[(2.15)], 
we deduce
\begin{align}
 \int_\O (u_i(\bx)-u_{i}^j(\bx)) & \ln(u_i^+(\bx)+\eps) d\bx \label{entropy1}\\
 & \geq  E_i^{j+1}-E_i^j - \ln(\eps) \int_\O (  u_i^-(\bx) - (u_i^j)^-(\bx) )d\bx. \nonumber
\end{align}

\medskip

\no$\star$ \emph{The diffusion term. }Using the explicit expression of $p_i$, the second term of the right hand side
of \fer{logid} (the diffusion term) may be expressed  as $-\tau I^i$, with $I^i$ split as $ I^i= I^i_0 + I^i_1+I^{ik}_2 $, where
\begin{align*}
  & I^i_0 & =  \frac{c_i}{2}\int_\O\int_\O & J_\eps(\bx-\by)  \big(u_i^+(\by)-   u_i^+(\bx)\big)  \big(\ln(u_i^+(\by)+\eps)-\ln(u_i^+(\bx)+\eps) \big)d\by d\bx&\\
  & I^i_1 & =  \frac{a_i}{2} \int_\O\int_\O & J_\eps(\bx-\by) \big(u_i^+(\by)+  u_i^+(\bx)+2\eps\big)  \big(u_i^+(\by)- u_i^+(\bx)\big)  &\\
  &&& \times \big(\ln(u_i^+(\by)+\eps)-\ln(u_i^+(\bx)+\eps) \big)d\by d\bx&\\
 & I^{ik}_2 & = \frac{1}{2}\int_\O\int_\O & J_\eps(\bx-\by)  \big((u_i^+(\by)+\eps)(u_k^+(\by)+\eps)  - (u_i^+(\bx)+\eps)(u_k^+(\bx)+\eps)\big) & \\
  &&& \times  \big(\ln(u_i^+(\by)+\eps)-\ln(u_i^+(\bx)+\eps) \big)d\by d\bx,&   
\end{align*}
for $i,k=1,2$, $i\neq k$. 

The non-negativity of $I^i_0$ and $I^{12}_2  + I^{21}_2$ is directly deduced from the monotonicity of the $\ln$ function. This is straightforward for $I^i_0$. For the cross-diffusion terms, we have, 
\begin{align*}
 I^{12}_2 & + I^{21}_2  = \frac{1}{2}\int_\O\int_\O J_\eps(\bx-\by)  \big((u_1^+(\by)+\eps)(u_2^+(\by)+\eps)  - (u_1^+(\bx)+\eps)(u_2^+(\bx)+\eps)\big) \\
 & \times  \big(\ln\big((u_1^+(\by)+\eps)(u_2^+(\by)+\eps)\big)-\ln\big((u_1^+(\bx)+\eps)(u_2^+(\bx)+\eps)\big) \big)d\by d\bx \geq0.
 \end{align*}
Due to the symmetry of $J_\eps$, the self-diffusion terms may be expressed as 
\begin{align*}
I_1^i = a_i \int_\O\int_\O J_\eps(\bx-\by) \big(u_i^+(\by)+ \eps \big) & \big(u_i^+(\by)- u_i^+(\bx)\big) \\
  & \times \big(\ln(u_i^+(\by)+\eps)-\ln(u_i^+(\bx)+\eps) \big)d\by d\bx.
\end{align*}
Using the elementary inequality
\begin{align}
\label{elemineq1}
s (\ln(s)-\ln(\sigma))\geq s-\sigma \qtext{for all }s,\sigma>0,
\end{align}
we obtain
\begin{align*}
I_1^i \geq a_i \int_\O\int_\O J_\eps(\bx-\by)  \big(u_i^+(\by)- u_i^+(\bx)\big)^2 d\by d\bx.
\end{align*}
This estimate and the non-negativity of $I^i_0$ and $I^{12}_2+I^{21}_2$ imply 
\begin{align}
\label{entropy2}
\sum_{i=1}^2 I^i \geq \sum_{i=1}^2 a_i \int_\O\int_\O J_\eps(\bx-\by)  \big(u_i^+(\by)- u_i^+(\bx)\big)^2 d\by d\bx.
\end{align}

\medskip

\no$\star$ \emph{The Lotka-Volterra term. } We have, for $i,k=1,2$, $i\neq k$, 
\begin{align*}
 \int_\O f_i(\bu^+(\bx)+\eps) & \ln(u_i^+(\bx)+\eps) d\bx = 
\alpha_i  \int_\O (u_i^+(\bx)+\eps) \ln(u_i^+(\bx)+\eps) d\bx \\
& -\beta_{ii}  \int_\O (u_i^+(\bx)+\eps)^2 \ln(u_i^+(\bx)+\eps) d\bx \\
& -\beta_{ik}  \int_\O (u_i^+(\bx)+\eps)(u_k^+(\bx)+\eps) \ln(u_i^+(\bx)+\eps) d\bx = F_0^i+F_1^i+F_2^{ik}.
\end{align*}
The first term may be rewritten as 
\begin{align*}
F_0^i = \alpha_i E_i + \alpha_i\nor{u_i^+}_{L^1}+\alpha_i \abs{\O}\eps.
\end{align*}
For the second term, using that $s^2\ln(s)\geq -\frac{1}{2\e}$ for $s>0$ , we obtain $F_1^i \leq \frac{\beta_{ii}}{2\e}$.
The cross terms are bounded as follows. If $\beta_{12}=\beta_{21}=0$ then we have nothing to do. Assume, without loss of generality, that $\beta_{12}> 0$ and $\beta_{12}>\beta_{21}$, and let $r= \beta_{21}/\beta_{12}$. 
Then, for $s,\sigma>0$, we have
\begin{align*}
\beta_{12}s\sigma\ln(s)+\beta_{21}s\sigma\ln(\sigma) = \beta_{12}\sigma^{1-r}s\sigma^r\ln(s\sigma^r).
\end{align*}
Using the inequality \fer{elemineq1}, we deduce
\begin{align*}
\beta_{12}s\sigma\ln(s)+\beta_{21}s\sigma\ln(\sigma) \geq \beta_{12}\sigma^{1-r}(s\sigma^r-1 ) \geq -\beta_{12}\sigma^{1-r}.
\end{align*}
Therefore, from H\"older's inequality we deduce
\begin{align*}
F_2^{12}+F_2^{21} & \leq \beta_{12} \int_\O (u_2^+(\bx)+\eps)^{1-r} d\bx \leq \beta_{12}\abs{\O}^r \Big(\int_\O (u_2^+(\bx)+\eps) d\bx\Big)^{1-r}\\
& \leq c(1+\nor{u_2^+}_{L^1}),
\end{align*}
were we used $\abs{x}^{1-r}\leq 1+ \abs{x}$, for $r\in(0,1)$.
Gathering the previous estimates yields 
\begin{align}
\sum_{\substack{i,k= 1 \\ k \neq i}}^2  (F_0^i+F_1^i+F_2^{ik}) \leq c \Big( 1 + \sum_{i= 1}^2 \big(E_i  + \nor{u_i^+}_{L^1}\big) \Big). \label{entropy3}
\end{align}
Finally, using \fer{entropy1}, \fer{entropy2} and  \fer{entropy3} in \fer{logid}, we deduce \fer{lema1:entropy}.

\medskip

\no$\bullet$ \emph{$W^{1,\infty}(\O)$ estimate. } Differentiating \fer{eq.eq} with respect to $x_k$, for $k=1,\ldots,d$, we obtain, for $i=1,2$ 
\begin{align*}
 \p_{x_k}u_i(\bx) & = \p_{x_k}u_i^j(\bx)  \\
 & + \tau  \int_\O \p_{x_k} J_\eps(\bx-\by) \big(p_i (\bu^+(\by)+\eps)-p_i (\bu^+(\bx)+\eps)\big) d\by \\
 &-\tau   \Big(\p_1 p_i (\bu^+(\bx)+\eps)\p_{x_k} u_1^+(\bx) 
 + \p_2 p_i (\bu^+(\bx)+\eps)\p_{x_k} u_2^+(\bx) \Big) \int_\O J_\eps(\bx-\by) d\by\\ 
 & + \tau \Big(\p_1 f_i (\bu^+(\bx)+\eps)\p_{x_k} u_1^+(\bx) 
 + \p_2 f_i (\bu^+(\bx)+\eps)\p_{x_k} u_2^+(\bx) \Big).
\end{align*}
Therefore, using \fer{propeps2} and (L), we obtain
\begin{align*}
\abs{ \p_{x_k}u_i(\bx)} & \leq \abs{ \p_{x_k}u_i^j(\bx)}  
+\tau \max(1,J_0) L(\nor{\bu}_{L^\infty})\Big(\nor{\p_{x_k} J_\eps}_{L^1}+\sum_{n=1}^2\p_{x_k} u_n(\bx)  \Big).
\end{align*}
Summing in $i=1,2$, taking the supremum in $k=1,\ldots,d$, and recalling that $\nor{\p_{x_k} J_\eps}_{L^1}$ is uniformly bounded, we deduce \fer{lema1:derivada}.

\medskip

Finally, \fer{lema1:entropy2} is deduced as follows
\begin{align}
\frac{1}{2}\int_\O\int_\O & J_\eps(\bx-\by)  \big(u_i^+(\by)- u_i^+(\bx)\big)^2 d\by d\bx =  \int_\O\int_\O J_\eps(\bx-\by) d\by \abs{u_i^+(\bx)}^2  d\bx \nonumber\\
& - 
\int_\O\int_\O J_\eps(\bx-\by)  u_i^+(\by) u_i^+(\bx)d\by d\bx \geq 
J_0\nor{u_i^+}_{L^2}^2 - \nor{J_\eps}_{L^\infty} \nor{ u_i^+}_{L^1}^2. \label{poin1}
\end{align}
\begin{remark}
Identity \fer{poin1} leads to a nonlocal variant of Poincare's inequality which provides an estimate of the $L^2(\O)$ norm of a function in terms of its $L^1(\O)$ norm and the norm of its nonlocal gradient in $L^2(\O)$. More explicitely, for $v\in L^2(\O)$ and $J$ satisfying (H), we have 
\begin{align*}
\nor{v}_{L^2}^2 \leq \frac{\nor{J}_{L^\infty}}{J_0}  \nor{v}_{L^1}^2 + \frac{1}{2J_0}\int_\O\int_\O J(\bx-\by)  \big(v(\by)- v(\bx)\big)^2 d\by d\bx.
\end{align*}
See \cite{Andreu2010book} for a generalization to $L^q(\O)$.
\end{remark}

\end{proof}

\subsection{Passing to the limit $\tau\to 0$}

Consider the partition of the interval $[0,t_N]$ given by 
$t_0=0$ and $t_j=\sum_{k=1}^j \tau_{k-1}$ for $j=1,\ldots,N$, where $\tau_k$ satisfies 
\fer{cond.tau}.
We define, for $(t,\bx)\in (t_j,t_{j+1}]\times \O$, for $j=0,\ldots,N-1$ the time piecewise constant and
piecewise linear functions given by
\begin{align*}
 u_i^{(\tau)}(t,\bx)=u_i^{j+1}(\bx),\quad  
 \tilde u_i^{(\tau)}(t,\bx)=u_i^{j+1}(\bx)+\frac{t_{j+1}-t}{\tau_j}(u_i^j(\bx)-u_i^{j+1}(\bx)),
\end{align*}
where $(u_1^{j+1},u_2^{j+1})$ is the solution of  \fer{eq:discu}. 
We also consider the shift operator $\sigma_\tau u_i^{(\tau)}(t,\cdot) = u_i^{j}$, for $t\in(t_j,t_{j+1}]$.  With this notation,  equation \fer{eq:discu} may be rewritten as, for $(t,\bx)\in Q_{t_N}$,   
\begin{align}
 \p_t \tilde u_i^{(\tau)}(t,\bx) = &  \int_\O J_\eps(\bx-\by) \big(p_i ((\bu^{(\tau)})^+(t,\by)+\eps)-p_i ((\bu^{(\tau)})^+(t,\bx)+\eps) \big) d\by \label{eq:discuc}\\ 
 & + f_i((\bu^{(\tau)})^+(t,\bx)+\eps). \nonumber
\end{align}

\begin{corollary}
\label{cor.tau}
For $i=1,2$, the norms
\begin{align*}
\nor{\grad u_i^{(\tau)}}_{L^\infty(Q_{t_N})} ,\quad 
 \nor{\grad \tilde u_i^{(\tau)}}_{L^\infty(Q_{t_N})},
\end{align*}
are uniformly bounded with respect to  $\tau$. In addition, for $c$ independent of $\eps$ and $\tau$, 
\begin{align}
\label{cor1}
\nor{u_i^{(\tau)}}_{L^\infty(Q_{t_N})} \leq c,\quad  \nor{(u_i^{(\tau)})^-}_{L^\infty(0,t_N;L^1(\O))} \leq c\eps, \quad 
 \nor{\p_t\tilde u_i^{(\tau)}}_{L^\infty(Q_{t_N})} \leq c,
\end{align}
and, for $t\in [0,t_N]$, 
\begin{align}
  E^{(\tau)}(t)  & +\sum_{i=1}^2 a_i \int_{Q_t}\int_\O J_\eps(\bx-\by)  \big((u_i^{(\tau)})^+(s,\by)- (u_i^{(\tau)})^+(s,\bx)\big)^2 d\by d\bx ds    \nonumber \\
& - \ln(\eps) \sum_{i=1}^2  \nor{(u_i^{(\tau)})^-}_{L^1} \leq 
E^{(\tau)}(0) + c (1+ \eps) , \label{lema2:entropy}
\end{align}
with 
\begin{align*}
  E^{(\tau)}(t) = \sum_{i=1}^2 \int_\O ((u_i^{(\tau)})^+(t,\bx)+\eps)\big(\ln((u_i^{(\tau)})^+(t,\bx)+\eps)-1) d\bx.
\end{align*}

\end{corollary}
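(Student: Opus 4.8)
The plan is to turn each per-step inequality of Lemma~\ref{lemma:1} into a discrete Gronwall recursion in the step index $j$, sum over the partition $t_0<\dots<t_N$, and exploit that the total length $t_N=\sum_k\tau_k\le C(M_0)$ is bounded independently of $\eps$ and $\tau$. Because $u_i^{(\tau)}$ takes the nodal values $u_i^{j+1}$ and $\tilde u_i^{(\tau)}$ is a convex combination of $u_i^j$ and $u_i^{j+1}$, every bound established at the nodes transfers at once to both interpolants, since the $L^\infty$ and gradient norms of a convex combination do not exceed the maximum of those at the endpoints. The estimates must be carried out in an order dictated by their mutual dependence.

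First, \fer{lema1:ele1mas} gives $\sum_i\nor{u_i^+}_{L^1}\le(1+c\tau)\sum_i\nor{(u_i^j)^+}_{L^1}+c\tau\eps$ after absorbing the current-step term for $\tau$ small; iterating and using $\sum_j\tau_{j+1}=t_N\le C(M_0)$ yields the uniform bound $\nor{(u_i^{(\tau)})^+}_{L^\infty(0,t_N;L^1(\O))}\le c$. Feeding this into \fer{lema1:ele1menos} and summing, the non-negativity of $u_{0\eps i}$ kills the initial term and leaves $\nor{(u_i^{(\tau)})^-}_{L^\infty(0,t_N;L^1(\O))}\le c\eps\sum_j\tau_{j+1}\le c\eps$, the second bound in \fer{cor1}.

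Next I would sum the entropy inequality \fer{lema1:entropy2}. Writing $Y_n=\sum_i\big(E_i^n-\ln(\eps)\nor{(u_i^n)^-}_{L^1}\big)$, and noting that $\eps<1$ makes $-\ln(\eps)\ge0$, its right-hand side contributes a term $c\tau\sum_iE_i^{j+1}\le c\tau Y_{j+1}$ together with the already-controlled quantities $a_i\nor{u_i^+}_{L^1}^2$, $\nor{u_i^+}_{L^1}$ and $\eps$; since $E_i^n\ge -\abs{\O}$ the sequence $Y_n$ is bounded below, so after a shift by a fixed constant a discrete Gronwall argument (again using $t_N\le C(M_0)$) bounds $Y_n$ and, simultaneously, the accumulated dissipation $\sum_j2\tau a_iJ_0\nor{(u_i^{j})^+}_{L^2}^2$ on the left. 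This is exactly the summed form of \fer{lema2:entropy}, and it furnishes the space-time bound $\sum_j\tau_{j+1}\nor{(u_i^{(\tau)})^+}_{L^2}^2\le c$ needed below. With this $L^2$ control in hand, \fer{lema1:eleinf} becomes a recursion $\sum_i\nor{u_i^{j+1}}_{L^\infty}\le\sum_i\nor{u_i^j}_{L^\infty}+c\tau\sum_i\nor{u_i^{j+1}}_{L^\infty}+c\tau b_{j+1}$ with $\sum_jb_{j+1}\le c$; a final discrete Gronwall gives $\nor{u_i^{(\tau)}}_{L^\infty(Q_{t_N})}\le c$. Once the $L^\infty$ norm is controlled, $L(\nor{\bu}_{L^\infty})$ in \fer{lema1:derivada} is a fixed constant, so summing that inequality and invoking \fer{propeps2} yields the uniform gradient bounds, while $\nor{\p_t\tilde u_i^{(\tau)}}_{L^\infty(Q_{t_N})}\le c$ follows directly from \fer{eq:discuc}, its right-hand side being estimated by $(\nor{J_\eps}_{L^1}+1)$ times the Lipschitz constants of $p_i,f_i$ on the now-bounded range of $\bu^{(\tau)}$.

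The main obstacle is the correct sequencing together with the sign bookkeeping in the entropy step: the $L^\infty$ recursion cannot be closed without the space-time $L^2$ bound, which is only available through the entropy inequality, whose Gronwall argument is delicate because $E_i$ appears on both sides and is not sign-definite. The resolution rests on three points that must be verified carefully, namely that $s(\ln s-1)$ is bounded below so that $Y_n$ can be shifted to be non-negative, that $\eps\in(0,1)$ renders the $-\ln(\eps)$ coefficient non-negative so the negative-part term helps rather than hurts, and that $t_N\le C(M_0)$ keeps every Gronwall constant independent of $\eps$ and $\tau$.
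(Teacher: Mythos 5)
Your overall strategy is the same as the paper's: turn each per-step estimate of Lemma~\ref{lemma:1} into a discrete Gronwall recursion, in the order $L^1$ bound, negative-part bound, entropy/$L^2$ bound, $L^\infty$ bound, then gradient and time-derivative bounds, and transfer nodal bounds to the interpolants. However, there is a genuine gap in how you obtain the space-time $L^2$ bound, and it breaks your argument exactly in a case the theorem allows. When you sum \fer{lema1:ele1mas} you discard the term $\tau\beta_{ii}\nor{u_i^+}_{L^2}^2$, keeping only the $L^1$ information, and you then assert that the space-time $L^2$ bound ``is only available through the entropy inequality.'' But the dissipation term in \fer{lema1:entropy2} is $2\tau a_i J_0\nor{u_i^+}_{L^2}^2$, i.e.\ it is weighted by $a_i$. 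Theorem~\ref{th.existence} only assumes $a_i+\beta_{ii}>0$, so the case $a_i=0$, $\beta_{ii}>0$ is admissible (indeed the paper highlights that the nonlocal model allows $c_i=a_i=0$ when $\beta_{ii}>0$). For such a component the entropy step gives no $L^2$ control whatsoever, and since you have thrown away the $\beta_{ii}$-weighted term, the recursion \fer{lema1:eleinf} cannot be closed: the accumulated term $c\sum_j\tau\nor{u_i^+}_{L^2}^2$ on its right-hand side remains uncontrolled. The paper avoids this by keeping the $\beta_{ii}$-term when summing \fer{lema1:ele1mas} (see \fer{ggc1}), so that it obtains \emph{both} $\sum_i\beta_{ii}\nor{(u_i^{(\tau)})^+}_{L^2(Q_{t_N})}^2\leq c$ and, from the entropy, $\sum_i a_{i}\nor{(u_i^{(\tau)})^+}_{L^2(Q_{t_N})}^2\leq c$; the hypothesis $a_i+\beta_{ii}>0$ then yields the unweighted $L^2(Q_{t_N})$ bound for each component. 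As written, your proof is valid only when $a_1,a_2>0$.

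A secondary, easily repaired slip: summing \fer{lema1:entropy2} is \emph{not} ``exactly the summed form of \fer{lema2:entropy},'' because \fer{lema2:entropy} retains the full nonlocal dissipation $\int_{Q_t}\int_\O J_\eps(\bx-\by)\big((u_i^{(\tau)})^+(s,\by)-(u_i^{(\tau)})^+(s,\bx)\big)^2\,d\by\,d\bx\,ds$, which appears in \fer{lema1:entropy} but has already been replaced by its weaker $L^2$ consequence (via \fer{poin1}) in \fer{lema1:entropy2}. Your Gronwall argument itself is sound — the lower bound $E_i\geq-\abs{\O}$, the observation that $-\ln(\eps)\geq 0$ makes the negative-part term harmless, and the boundedness of $t_N$ are exactly the right ingredients — but it should be run on \fer{lema1:entropy}, as the paper does, to produce \fer{lema2:entropy}, with \fer{lema1:entropy2} used only to extract the $a_i$-weighted $L^2$ estimate.
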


\begin{proof}
The result is a straightforward consequence of the estimates obtained in Lemma~\ref{lemma:1} and Gronwall's lemma. For instance, from \fer{lema1:ele1mas} we get, summing on $j=0,\ldots,N-1$,
\begin{align}
\label{ggc1}
& \sum_{i=1}^2 \big(\nor{(u_i^{(\tau)})^+(t_N)}_{L^1(\O)} + \beta_{ii}\nor{(u_i^{(\tau)})^+}_{L^2(Q_{t_N})}^2 \big) \leq  
 \sum_{i=1}^2 \nor{(u_{0i}^{(\tau)})^+}_{L^1(\O)} \\
& \qquad +c \sum_{i=1}^2 \nor{(u_i^{(\tau)})^+}_{L^1(Q_{t_N})}   +c\eps t_N. \nonumber
\end{align}
Gronwall's inequality implies 
\begin{align*}
& \sum_{i=1}^2 \big(\nor{(u_i^{(\tau)})^+(t_N)}_{L^1(\O)} \leq  e^{ct_N} \big( 
 \sum_{i=1}^2 \nor{(u_{0i}^{(\tau)})^+}_{L^1(\O)}  +c\eps t_N \big) \leq c,
\end{align*}
and then from \fer{ggc1} we also get 
\begin{align*}
& \sum_{i=1}^2 \beta_{ii}\nor{(u_i^{(\tau)})^+}_{L^2(Q_{t_N})}^2 \leq  c.
\end{align*}
Similarly, we obtain from \fer{lema1:ele1menos} 
\begin{align*}
\sum_{i=1}^2 \nor{(u_i^{(\tau)})^-(t_N)}_{L^1(\O)} \leq c\eps,
\end{align*}
and then, from \fer{lema1:entropy2}, 
\begin{align*}
\sum_{i=1}^2 a_{i}\nor{(u_i^{(\tau)})^+}_{L^2(Q_{t_N})}^2 \leq  c.
\end{align*}
Being the $L^1(Q_{t_N})$ and the $L^2(Q_{t_N})$ norms of $u_i^{(\tau)}$ uniformly bounded with respect to $\tau$ and $\eps$, the uniform bound for its 
$L^\infty(Q_{t_N})$ norm is then deduced from \fer{lema1:eleinf} and Gronwall's lemma. And then, the uniform bounds with respect to $\tau$ for  the norms of $\grad  u_i^{(\tau)}$,  $\grad \tilde u_i^{(\tau)}$ are deduced from \fer{lema1:derivada} and the uniform bound on $\nor{u_i^{(\tau)}}_{L^\infty(Q_{t_N})}$. Observe that these bounds are not uniform with respect to $\eps$, since they depend on $\nor{\grad u_{0\eps i}}_{L^\infty}$, see \fer{ass:regularity}.  By definition, the norm $\nor{\p_t \tilde u_i^{(\tau)}}_{L^\infty(Q_{t_N})}$ is bounded in terms of $\nor{u_i^{(\tau)}}_{L^\infty(Q_{t_N})}$, and thus uniformly bounded with respect to $\tau$ and $\eps$. Finally, \fer{lema2:entropy} is deduced from the previous estimates and Gronwall's lemma applied to \fer{lema1:entropy}.
\end{proof}

Corollary~\ref{cor.tau} implies the existence of functions
$u_i\in L^\infty(0,{{t_N}}; W^{1,\infty}(\O))$ and 
$\tilde u_i\in W^{1,\infty}(Q_{{t_N}})$ such that, at least for subsequences (not relabeled)
\begin{align}
 & u_i^{(\tau)} \to u_i \qtext{weakly* in }L^\infty(0,{t_N}; W^{1,\infty}(\O)), \nonumber \\
 & \tilde u_i^{(\tau)} \to \tilde u_i \qtext{weakly* in }   W^{1,\infty}(Q_{{t_N}}), \label{conv.1}
\end{align}
as $\tau \to 0$.
In particular, by compactness 
\begin{equation*}
 \tilde u_i^{(\tau)} \to \tilde u_i \qtext{uniformly in } C([0,{t_N}]\times \bar \O). 
\end{equation*}
Since, for $t\in (t_j,t_{j+1}]$,
\begin{align*}
 \abs{ u_i^{(\tau)}(t,\bx) -\tilde u_i^{(\tau)}(t,\bx) }=& \left|\frac{(j+1)\tau -t}{\tau}(u_i^j(\bx)-u_i^{j+1}(\bx))\right| \\
 & \leq \tau \nor{\p_t \tilde u_i^{(\tau)}}_{L^{\infty}(Q_{{t_N}})},
\end{align*}
we deduce both $u_i=\tilde u_i$ and, up to a subsequence, 
\begin{equation}
 \label{conv.2}
 u_i^{(\tau)} \to  u_i \qtext{strongly in } L^\infty(Q_{t_N}) \text{ and a.e. in } Q_{t_N}. 
\end{equation}
With the properties of convergence \fer{conv.1} and \fer{conv.2} the passing to the limit $\tau\to0$ in \fer{eq:discuc} is justified, finding that, for $i=1,2$,  $u_i\in W^{1,\infty}(Q_{t_N})$ is a solution of 
\begin{align}
\label{eq:discuc2}
 \p_t u_i(t,\bx) = &  \int_\O J_\eps(\bx-\by) \big(p_i (\bu^+(t,\by)+\eps)-p_i (\bu^+(t,\bx)+\eps) \big) d\by \\ 
 & \quad + f_i(\bu^+(t,\bx)+\eps), \nonumber \\
 u_{i}(0,\bx) = & ~u_{0i\eps}(\bx).  \label{eq:id2}
\end{align}
Moreover, from \fer{cor1} and \fer{lema2:entropy} we deduce  
\begin{align}
\label{estinff}
\nor{u_i}_{L^\infty(Q_{t_N})} \leq c,\quad  \nor{u_i^-}_{L^\infty(0,t_N;L^1(\O))} \leq c\eps,  \quad 
 \nor{\p_t u_i}_{L^\infty(Q_{t_N})} \leq c,
\end{align}
and, for $t\in [0,t_N]$, 
\begin{align}
  E(t)  & +\sum_{i=1}^2 a_i \int_{Q_t}\int_\O J_\eps(\bx-\by)  \big(u_i^+(s,\by)- u_i^+(s,\bx)\big)^2 d\by d\bx ds   - \ln(\eps) \sum_{i=1}^2  \nor{u_i^-}_{L^1}  \nonumber \\
& \leq 
E(0) + c (1+ \eps) , \label{conc:entropy}
\end{align}
for some constant $c>0$ independent of $\eps$.

Thanks to the $L^\infty(Q_{t_N})$ uniform estimate on $u_i$, we may go back to the fixed point operator 
\fer{fpop} and obtain a sequence of functions satisfying \fer{eq:discu} for the initial iteration $\bu_\eps^0 = \bu_\eps(t_N,\cdot)$. These functions satisfy the estimates of Lemma~\ref{lemma:1}, so we may define from them a solution of \fer{eq:discuc2}-\fer{eq:id2} in the time interval 
$[0,2t_N]$.  This procedure may be continued until reaching any arbitrarily fixed final time, $T$. 

\section{Passing to the limit $\eps\to0$}\label{sec:eps}

Let us denote by $\bu_\eps$ to the solution of \fer{eq:discuc2}-\fer{eq:id2} so that 
\fer{estinff} is rewritten as, for some constant $c>0$ independent of $\eps$, 
\begin{align}
\label{estinff2}
\nor{u_{\eps i}}_{L^\infty(Q_{T})} \leq c,\quad  \nor{u_{\eps i}^-}_{L^\infty(0,T;L^1(\O))} \leq c\eps,
                   \quad 
 \nor{\p_t u_{\eps i}}_{L^\infty(Q_{T})} \leq c.
\end{align}
Being $u_{0\eps}, J_\eps$ smooth functions, we may deduce an $L^\infty(Q_T)$ bound for $\grad u_\eps$ as in \fer{lema1:derivada}, not necessarily uniform in $\eps$, but  allowing to differentiate equation \fer{eq:discuc2} with respect to $x_k$ to obtain,
for $k=1,\ldots,d$,  $i,j=1,2$, $i\neq j$,                   
\begin{align}
\label{ggc2}
  \p_t  \p_{x_k}  u_{\eps i}(t,\bx)  = & \int_\O \p_{x_k} J_\eps(\bx-\by) \big(p_i (\bu_\eps^+(t,\by)+\eps)-p_i (\bu_\eps^+(t,\bx)+\eps)\big) d\by \\
 & + \sum_{j=1}^2 \Big[\Big( \p_jf_i (\bu_\eps^+(t,\bx)+\eps)- m_\eps(\bx) \p_jp_i (\bu_\eps^+(t,\bx)+\eps) \Big) \nonumber \\
 &\qquad \times  \sign(u_{\eps j}(t,\bx)+\eps) \p_{x_k} u_{\eps j}(t,\bx) \Big],\nonumber
\end{align}
with $m_\eps$ given by \fer{conc.Jeps}. 
Identity \fer{ggc2}  may be written in matrix form as 
\begin{align}
\label{edo}
 \p_t \bv_\eps(t,\bx) = \bA_\eps(t,\bx) \bv_\eps(t,\bx) + \bb_\eps(t,\bx),
\end{align}
with $v_{\eps i}(t,\bx) = \p_{x_k}u_{\eps i}(t,\bx),$
\begin{align*}
& A_{\eps ij} (t,\bx)=  \Big( \p_jf_i (\bu_\eps^+(t,\bx)+\eps)- m_\eps(\bx) \p_jp_i (\bu_\eps^+(t,\bx)+\eps) \Big)
 \sign(u_{\eps j}(t,\bx)+\eps),\\
& b_{\eps i} (t,\bx) =  \int_\O \p_{x_k} J_\eps(\bx-\by) \big(p_i (\bu_\eps^+(t,\by)+\eps)-p_i (\bu_\eps^+(t,\bx)+\eps)\big) d\by .
\end{align*}
Since $u_{\eps i}$ is uniformly bounded in $L^\infty(Q_T)$ and $\p_{x_k} J_\eps$ is uniformly bounded in $L^1(B)$ we deduce, using properties  and  \fer{conc.Jeps2} and (L),   
that $A_{\eps ij},b_{\eps i}$ are uniformly bounded in $L^\infty(Q_T)$. Integrating \fer{edo} in $(0,t)$ we obtain 
\begin{align*}
 \p_{x_k}u_{\eps i}(t,\bx) = G_{\eps i}(t,\bx) + H_{\eps i1}(t,\bx)\p_{x_k}u_{0\eps 1}(\bx) + 
 H_{\eps i2}(t,\bx)\p_{x_k}u_{0\eps 2}(\bx),
\end{align*}
with $G_{\eps i},H_{\eps ij}$ uniformly bounded in $L^\infty(Q_T)$. Finally, since $\p_{x_k}u_{0\eps i}$ is uniformly bounded in $L^1(\O)$, see \fer{propeps1b},  we deduce
\begin{align}
\label{bound.2}
\p_{x_k}u_{\eps i} \text{ is uniformly bounded in }L^\infty(0,T;L^1(\O)). 
\end{align}
The time derivative bound in \fer{estinff2} and \fer{bound.2}
allow to deduce, using the compactness result \cite[Cor. 4, p. 85]{Simon1986},  the existence of $u_i\in C([0,T];L^\infty(\O)\cap BV(\O))$ such that $u_{\eps i}\to u_i$ strongly in  $L^q(Q_T)$, for all $q<\infty$, and a.e. in $Q_T$. 
The time derivative uniform bound in \fer{estinff2} also implies that, up to a subsequence (not relabeled), we have 
$\p_t u_{\eps i}\to \p_t u_i$ weakly* in $L^\infty(Q_T)$. 

These convergences allow to pass to the limit $\eps\to 0$ in \fer{eq:discuc2}-\fer{eq:id2} 
(with $u$ replaced by $u_\eps$) and identify the limit 
\begin{align*}
u_i \in W^{1,\infty}(0,T;L^\infty(\O))\cap C([0,T];L^\infty(\O)\cap BV(\O)), 
\end{align*}
 as a solution of \fer{eq.eq}-\fer{eq.id}. 
Observe that, since $u_{\eps i}$ satisfies the second bound of \fer{estinff2} we also deduce that  $u_i\geq 0$ a.e. in $Q_T$. Finally, the strong and a.e. convergences of $\bu_\eps$ and $J_\eps$ also allow to pass to the limit in \fer{conc:entropy} to deduce \fer{th1:entropy}.

\begin{remark}
Observe that, like in the local diffusion problem, the non-negativity of the limit solution may also be deduced from the entropy inequality \fer{conc:entropy}.
\end{remark}

\section{Uniqueness of solution}\label{sec:uniqueness}
We use a duality technique to prove the uniqueness of solution. Let $\bu,~\bv$ be two solutions of \fer{eq.eq}-\fer{eq.id} and set $\bw = \bu-\bv$. Then, for $i=1,2$ and $(t,\bx)\in Q_T$, we have $w_i(0,\bx)=  0$ and 
\begin{align*}
\p_t w_i(t,\bx) =&  \int_\O J(\bx-\by) \Big(p_i(\bu(t,\by)-p_i(\bv(t,\by))- \big(p_i(\bu(t,\bx)-p_i(\bv(t,\bx))\big)\Big) d\by\\
& + f_i(\bu(t,\bx)-f_i(\bv(t,\bx))\big).
\end{align*}
Testing this equation with some $\vfi_i \in W^{1,1}(0,T;L^1(\O))$, we obtain, for $i,j=1,2$ and $i\neq j$ ,
\begin{align*}
\int_\O \p_t w_i(t,\bx) & \vfi_i(t,\bx) d\bx  \\
=& -\int_\O \int_\O J(\bx-\by) \big(p_i(\bu(t,\bx))-p_i(\bv(t,\bx))\big) \big(\vfi_i(t,\by)-\vfi_i(t,\bx)\big) d\by d\bx \\
& + \int_\O \big(f_i(\bu(t,\bx)-f_i(\bv(t,\bx))\big) \vfi_i(t,\bx) d\bx.
\end{align*}
Integrating in $(0,T)$,  imposing $\vfi_i(T,\bx)=0$, and using that $w_i(0,\bx)=0$ and the explicit form of $\bp$ and $\bef$, we get 
\begin{align}
\int_{Q_T} w_i(t,\bx) & \p_t  \vfi_i(t,\bx) d\bx dt  \nonumber \\
=& -\int_{Q_T} w_i(t,\bx) \int_\O J(\bx-\by) K_{ij}(t,\bx) \big(\vfi_i(t,\by)-\vfi_i(t,\bx)\big) d\by d\bx dt \nonumber \\
& -\int_{Q_T} w_j(t,\bx) \int_\O J(\bx-\by) v_i(t,\bx) \big(\vfi_i(t,\by)-\vfi_i(t,\bx)\big) d\by d\bx dt \nonumber\\
& + \int_{Q_T} w_i(t,\bx) L_{ij}(t,\bx)  \vfi_i(t,\bx) d\bx dt+ \int_{Q_T} w_j(t,\bx) \beta_{ij} v_i(t,\bx)  \vfi_i(t,\bx) d\bx dt,\label{uniq.1}
\end{align}
where $K_{ij}= c_i + a_i(u_1+u_2)+u_j$, and $L_{ij}= \alpha_i -\beta_{ii}(u_1+u_2)-\beta_{ij}u_j$.
We introduce the change of time variable $t\to T-t$ and consider the following coupled linear problem: For $i,j=1,2$ and $i\neq j$, find $\vfi_i \in L^\infty(0,T;L^1(\O))$ such that for $(t,\bx)\in Q_T$,
\begin{align}
\label{eq.dual}
 \p_t  \vfi_i(t,\bx)  =   w_i(t,\bx) & +\int_\O J(\bx-\by) K_{ij}(t,\bx) \big(\vfi_i(t,\by)-\vfi_i(t,\bx)\big) d\by \\
& +\int_\O J(\bx-\by) v_j(t,\bx) \big(\vfi_j(t,\by)-\vfi_j(t,\bx)\big) d\by \nonumber\\
& - L_{ij}(t,\bx)  \vfi_i(t,\bx) - \beta_{ji} v_j(t,\bx)  \vfi_j(t,\bx) ,\nonumber\\
 \vfi_i(0,\bx)=0. \qquad &  \label{id.dual}
\end{align}
Observe that if this problem has a solution then, summing \fer{uniq.1} for $i=1,2$ (and recalling the change of time variable), we obtain
\begin{align*}
\sum_{i=1}^2\int_{Q_T} \abs{w_i(t,\bx)}^2 d\bx dt  = 0,
\end{align*}
implying $w_i = 0$ a.e. in $Q_T$, and therefore proving the uniqueness of solution of \fer{eq.eq}-\fer{eq.id}.
The existence of solutions of the linear problem for the test functions may be proved by Banach's fixed point theorem. Let $T_0\in(0,T]$ be a constant to be fixed and consider the Banach space $X_{T_0} = L^\infty(0,T_0;L^\infty(\O))$. We define the operator $\bG=(G_1,G_2)$ in $X_{T_0}\times X_{T_0}$ by, for $(t,\bx)\in Q_{T_0}$, $i,j=1,2$ with $i\neq j$,  
\begin{align*}
G_i(\bpsi) (t,\bx)  = &  \int_0^t w_i(t,\bx)dt +\int_0^t \int_\O J(\bx-\by) K_{ij}(t,\bx) \big(\psi_i(t,\by)-\psi_i(t,\bx)\big) d\by dt\\
& +\int_0^t\int_\O J(\bx-\by) v_j(t,\bx) \big(\psi_j(t,\by)-\psi_j(t,\bx)\big) d\by dt\\
& - \int_0^t \big(L_{ij}(t,\bx)  \psi_i(t,\bx) + \beta_{ji} v_j(t,\bx)  \psi_j(t,\bx) \big) dt.
\end{align*}
 Since solutions of \fer{eq.eq}-\fer{eq.id} are $L^\infty(Q_T)$ functions, we have $K_{ij}, ~L_{ij},~v_i \in L^\infty(Q_{T_0})$. Therefore
\begin{align*}
\nor{\bG(\bpsi)}_{X_{T_0}} = \sum_{i=1}^2 \nor{G_i(\bpsi)}_{X_{T_0}} \leq  & c T_0 (1+\nor{\bpsi}_{X_{T_0}}), 
\end{align*}
with $c$ depending on the $L^\infty(Q_T)$ norms of $\bu,~\bv$, and where, abusing on notation, we denote by $\nor{\cdot}_{X_{T_0}}$ both to the norms of scalar and vector functions. Therefore, $\bG(X_{T_0}\times X_{T_0}) \subset X_{T_0}\times X_{T_0}$.

To prove the contractivity, let $\bpsi,\bxi \in X_{T_0}\times X_{T_0}$. We have, for $(t,\bx)\in Q_{T_0}$, $i,j=1,2$ with $i\neq j$, 
\begin{align*}
G_i(\bpsi) (t,\bx) & - G_i(\bxi) (t,\bx)  \\
= &  \int_0^t \int_\O J(\bx-\by) K_{ij}(t,\bx) \big(\psi_i(t,\by)- \xi_i(t,\by) -(\psi_i(t,\bx)-\xi_i(t,\bx))\big) d\by dt\\
& +\int_0^t\int_\O J(\bx-\by) v_j(t,\bx) \big(\psi_j(t,\by)- \xi_j(t,\by) -(\psi_j(t,\bx)-\xi_j(t,\bx))\big)d\by dt\\
& - \int_0^t \big(L_{ij}(t,\bx)  (\psi_i(t,\bx)-\xi_i(t,\bx)) + \beta_{ji} v_j(t,\bx)  (\psi_j(t,\bx)-\xi_j(t,\bx)) \big) dt.
\end{align*}
Thus, for some $c$ depending on the $L^\infty(Q_T)$ norms of $\bu$ and $\bv$, we deduce
\begin{align*}
\nor{\bG(\bpsi) - \bG(\bxi)}_{X_{T_0}} \leq  cT_0 \nor{\bpsi -\bxi}_{X_{T_0}} .
\end{align*}
Choosing $T_0<1/c$ we obtain that $\bG$ is a strict contraction on $X_{T_0}\times X_{T_0}$.  This proves the existence of a local in time solution of the dual problem \fer{eq.dual}-\fer{id.dual} in the time interval $[0,T_0]$. We may easily extend this solution to any arbitrary $T>0$ by matching solutions in the intervals $[0,T_0]$, $[T_0,2T_0]$, etc. Hence the uniqueness of solution of problem \fer{eq.eq}-\fer{eq.id} follows.


\begin{thebibliography}{10}

\bibitem{Amann1989}
H.~Amann.
\newblock Dynamic theory of quasilinear parabolic systems. {III}. {G}lobal
  existence.
\newblock {\em Math. Z.}, 202:219--250, 1989.

\bibitem{Ambrosio2000}
L.~Ambrosio, N.~Fusco, and D.~Pallara.
\newblock {\em Functions of bounded variation and free discontinuity problems}.
\newblock Clarendon Press, Oxford, 2000.

\bibitem{Andreu2010book}
F.~Andreu-Vaillo, J.~M. Maz{\'o}n, J.~D. Rossi, and J.~J. Toledo-Melero.
\newblock {\em Nonlocal diffusion problems}, volume 165.
\newblock American Mathematical Society, 2010.

\bibitem{Buades2005}
A.~Buades, B.~Coll, and J.-M. Morel.
\newblock A review of image denoising algorithms, with a new one.
\newblock {\em Multiscale Model. Simul.}, 4(2):490--530, 2005.

\bibitem{Chen2004}
L.~Chen and A.~J{\"u}ngel.
\newblock Analysis of a multidimensional parabolic population model with strong
  cross-diffusion.
\newblock {\em SIAM J. Math. Anal.}, 36(1):301--322, 2004.

\bibitem{Chen2018preprint}
X.~Chen and A.~J{\"u}ngel.
\newblock Weak-strong uniqueness of renormalized solutions to
  reaction-cross-diffusion systems.
\newblock {\em arXiv:1805.02950}, 2018.

\bibitem{Desvillettes2014entropy}
L.~Desvillettes, T.~Lepoutre, and A.~Moussa.
\newblock Entropy, duality, and cross diffusion.
\newblock {\em SIAM J. Math. Anal.}, 46(1):820--853, 2014.

\bibitem{Desvillettes2015new}
L.~Desvillettes and A.~Trescases.
\newblock New results for triangular reaction cross diffusion system.
\newblock {\em J. Math. Anal. Appl.}, 430(1):32--59, 2015.

\bibitem{Galiano2012}
G.~Galiano.
\newblock On a cross-diffusion population model deduced from mutation and
  splitting of a single species.
\newblock {\em Comput. Math. Appl.}, 64:1927--1936, 2012.

\bibitem{Galiano2019}
G.~Galiano.
\newblock Well-posedness of an evolution problem with nonlocal diffusion.
\newblock {\em Nonlinear Anal. Real World Appl.}, 45:170--185, 2019.

\bibitem{Galiano2003a}
G.~Galiano, M.~L. Garz{\'o}n, and A.~J{\"u}ngel.
\newblock Semi-discretization in time and numerical convergence of solutions of
  a nonlinear cross-diffusion population model.
\newblock {\em Numer. Math.}, 93(4):655--673, 2003.

\bibitem{Gambino2013}
G.~Gambino, M.~Lombardo, and M.~Sammartino.
\newblock Pattern formation driven by cross-diffusion in a 2d domain.
\newblock {\em Nonlinear Anal. Real World Appl.}, 14(3):1755--1779, 2013.

\bibitem{Jungel2015boundedness}
A.~J{\"u}ngel.
\newblock The boundedness-by-entropy method for cross-diffusion systems.
\newblock {\em Nonlinearity}, 28(6):1963, 2015.

\bibitem{Kim1984}
J.~U. Kim.
\newblock Smooth solutions to a quasi-linear system of diffusion equations for
  a certain population model.
\newblock {\em Nonlinear Anal.}, 8(10):1121--1144, 1984.

\bibitem{Le2006}
D.~Le.
\newblock Global existence for a class of strongly coupled parabolic systems.
\newblock {\em Ann. Mat. Pura Appl.}, 185(1):133--154, 2006.

\bibitem{Lou1999}
Y.~Lou and W.-M. Ni.
\newblock Diffusion vs cross-diffusion: an elliptic approach.
\newblock {\em J. Differential Equations}, 154(1):157--190, 1999.

\bibitem{Shigesada1979}
N.~Shigesada, K.~Kawasaki, and E.~Teramoto.
\newblock Spatial segregation of interacting species.
\newblock {\em J. Theoret. Biol.}, 79(1):83--99, 1979.

\bibitem{Simon1986}
J.~Simon.
\newblock Compact sets in the spacel p (o, t; b).
\newblock {\em Ann. Mat. Pura Appl.}, 146(1):65--96, 1986.

\bibitem{Smith1997}
S.~M. Smith and J.~M. Brady.
\newblock Susan. a new approach to low level image processing.
\newblock {\em Int. J. Comput. Vis.}, 23(1):45--78, 1997.

\bibitem{Tomasi1998}
C.~Tomasi and R.~Manduchi.
\newblock Bilateral filtering for gray and color images.
\newblock In {\em Sixth International Conference on Computer Vision, 1998},
  pages 839--846. IEEE, 1998.

\bibitem{Yagi1993}
A.~Yagi.
\newblock Global solution to some quasilinear parabolic system in population
  dynamics.
\newblock {\em Nonlinear Anal.}, 21(8):603--630, 1993.

\end{thebibliography}

\end{document}